\newtheorem{thm}{Theorem}[section]
\newtheorem*{them}{Theorem}
\newtheorem{prop}[thm]{Proposition}
\newtheorem{cor}[thm]{Corollary}
\newtheorem{lem}[thm]{Lemma}
\theoremstyle{remark}
\newtheorem{rmk}[thm]{Remark}
\theoremstyle{definition} 
\newtheorem{ex}[thm]{Example} 
\newtheorem{convention}[thm]{Convention}
\newtheorem{defn}[thm]{Definition}
\newtheorem*{acknowledgements}{Acknowledgements}
\newcommand{\arw}{\ar}
\newcommand{\msf}{\mathsf}
\newcommand{\bb}{\mathbb}
\newcommand{\mrm}{\mathrm}
\newcommand{\ep}{\epsilon}
\newcommand{\rta}{\rightarrow}
\newcommand{\xrta}{\xrightarrow}
\newcommand{\mfrk}{\mathfrak}
\newcommand{\spv}{\msf{Sp}}
\begin{document}

\title{Poincar\'e/Koszul Duality for General Operads}
\author{Araminta Amabel\footnote{The author was supported by NSF Grant No. 1122374 while completing this work.}}

\maketitle

\begin{abstract}
We record a result concerning the Koszul dual of the arity filtration on an operad. This result is then used to give conditions under which, for a general operad, the Poincar\'e/Koszul duality arrow of \cite{zero} is an equivalence. Our proof is similar to that of \cite{PKD}. We discuss how the Poincar\'e/Koszul duality arrow for the little disks operad $\mathcal{E}_n$ relates to the work in \cite{PKD} when combined with the self-Koszul duality of $\mathcal{E}_n$.
\end{abstract}

\tableofcontents
\section{Introduction}
The focus of this note is a generalization inspired by the Poincar\'e/Koszul duality isomorphism studied in \cite{zero} and \cite{PKD} to operads other than the little $n$-disks operad, $\mathcal{E}_n$. We will focus on operads in $\msf{Sp}$, the $\infty$-category of spectra. In \cite{PKD}, the Poincar\'e/Koszul duality arrow is notationally a map 
\begin{align}\label{pkd arrow}
\int_{X_*}K\rta\int^{X_*^\neg}\msf{Bar}^{(n)}K.
\end{align}
On the left-hand side of (\ref{pkd arrow}), $X_*$ is a \emph{zero-pointed $n$-manifold} and $K$ is an $n$-disk algebra. The notation $\int_{X_*}K$ denotes the \emph{factorization homology of $X_*$ with coefficients in $K$}. Analogously, given a operad $\mathcal{O}$ in $\msf{Sp}$, a right $\mathcal{O}$-module $M$ and a $\mathcal{O}$-algebra $A$, one can define the factorization homology of $M$ with coefficients in $A$, denoted $\int_MA$.

On the right-hand side of (\ref{pkd arrow}), $X_*^\neg$ is another zero-pointed $n$-manifold and $\msf{Bar}^{(n)}K$ is the $n$th iterated bar construction on $K$. The notation $\int^{X_*^\neg} \msf{Bar}^{(n)}K$ denotes the \emph{factorization cohomology}. We will define an analogous  construction for a cooperad $\mathcal{P}$ in $\msf{Sp}$ that takes in a right $\mathcal{P}$-comodule $W$ and a $\mathcal{P}$-coalgebra $C$ and outputs an object that we refer to as the factorization cohomology and denote by $\int^WC$. 

In this note, the Poincar\'e/Koszul duality arrow will be replaced by a Koszul duality arrow
\[\int_MA\rta\int^{\msf{Bar}_\mathcal{O}M}\msf{Bar}_\mathcal{O}A.\]
This generalization was proposed in \cite[Rmk. 3.3.4]{zero}. 
A version of such a map was constructed by Ching in \cite[Eq. 7.14]{Ching} 
using the projective model structure on operads and the language of trees. 
Note that for operads other than $\mathcal{E}_n$, the Koszul duality arrow does not relate to Poincar\'e duality.

The main theorem we will prove is the following, for $\mathcal{V}$ as in Corollary \ref{main theorem}, below. 
\begin{them}[Main Theorem]
Let $\mathcal{O}$ be a reduced, nonunital, $(-1)$-connected operad in $\msf{Sp}$. Let $A$ be a 0-connected $\mathcal{O}$-algebra in $\mathcal{V}$ and $M$ a right $\mathcal{O}$-module in $\msf{Sp}$ that is uniformaly bounded below. The Koszul duality arrow is an equivalence
\[\int_MA\xrta{\sim}\int^{\msf{Bar}_\mathcal{O}M}\msf{Bar}_\mathcal{O}A.\]
\end{them}
\noindent This is Corollary \ref{main theorem} below. The connectivity conditions on $\mathcal{O}$ and $A$ are the same as those in a theorem of Ching and Harper, \cite{ChingHarper}. 
The relationship between our result and the Francis-Gaitsgory conjecture \cite[Conj. 3.4.5]{FG} is discussed in Remark \ref{last}.

In \cite[Prop. 6.1]{ChingBar}, Ching shows that the related arrow for $\mathcal{V}=\msf{Sp}$ constructed in \cite{Ching} is an equivalence under certain conditions. 
We discuss how their construction, conditions, and proof differs from ours below, Remark \ref{relation to ching}.

When $\mathcal{O}=\mathcal{E}_n$ is the little $n$-disks operad, one should compare our result with the framed case of \cite[Cor. 2.1.10]{PKD} . The result from \cite{PKD} says that the Poincar\'e/Koszul duality arrow (\ref{pkd arrow}) is an equivalence if and only if the Goodwillie filtration of $\int_{X_*}$ converges. The main theorem of this note allows us to separate the ``geometric" content of the framed case of \cite[Cor. 2.1.10]{PKD} from the more formal aspects. In particular, the underlying geometric input to \cite[Cor. 2.1.10]{PKD} seems to be that the Koszul dual of $\mathcal{E}_n$ is $\mathcal{E}_n[-n]$, together with a description of the image of the right $\mathcal{E}_n$-module associated to $\bb{R}^n_+$ under this identification. The long-standing conjecture that $\mathcal{E}_n$ is self-Koszul dual (up to a shift) was recently proven by Ching and Salvatore, \cite{ChingSalvatore}. We discuss what is known about the relationship between our main result, the results in \cite{PKD}, and the self-Koszul duality of $\mathcal{E}_n$ in the Appendix.

As in \cite{PKD}, we analyze the Koszul duality arrow by filtering both sides and checking that the arrow is an equivalence on layers. Unlike in \cite{PKD}, the filtrations used here are defined by filtrations on the right module $M$ alone rather than the whole factorization homology. One consequence of the work in \cite{PKD} is that Goodwillie calculus can be thought of as Koszul dual to manifold calculus. The analogous consequence here is that, for a general operad $\mathcal{O}$, the Goodwillie filtration of the identity on $\mathcal{O}$-algebras can be thought of as Koszul dual to the arity filtration. 

In \S\ref{background}, we review the theory of operads and cooperads, including the notions of (co)modules and (co)algebras over such gadgets. The bar and cobar constructions are also recalled in \S\ref{background}. We discuss the conjecture of Francis and Gaitsgory \cite{FG} regarding when the bar and cobar constructions are equivalences of categories. In \S\ref{underlying} we define a filtration on a general operad and right modules over an operad and describe how the filtration on operads transforms under the bar construction. 

In \S\ref{induced}, we study the filtration on algebras over an operad induced from the filtration on operads defined in \S\ref{underlying}. This filtration 
 is the subject of \cite{HarperHess}, where they prove convergence results and relate one of its layers to topological Quillen homology. These results are furthered in \cite{Pereira} and \cite{Heuts} where the relationship between the filtration and the Goodwillie filtration of the identity on algebras is considered. In \cite{Pereira} this is done for operads in spectra using model structures and in \cite{Heuts} this is done for stable $\infty$-operads. In \cite{KP}, they study a generalization of this filtration to a ``augmentation ideal" filtration.

We define notions of factorization (co)homology for a general (co)operad in \S\ref{general operad}. In \S\ref{KoszulDualityArrow}, we construct the Koszul duality arrow. Using the filtrations defined in \S\ref{underlying}, the main theorem is proven in \S\ref{general operad}. Finally, in the Appendix we discuss the status of the relationship of our work with \cite{PKD} in the case of the little $n$-disks operad.

\begin{acknowledgements}
We thank Gijs Heuts for an outline of the proof of Theorem \ref{level 1} and Michael Ching for clarification on his work with John Harper \cite{ChingHarper} in relation to Remark \ref{bar equiv is our equiv}, below. We would also like to thank John Francis, Peter Haine, Ben Knudsen, Dylan Wilson, and our advisor Mike Hopkins for helpful conversations, explanations, corrections and motivating ideas.
Finally, we thank the referees for their comments which have greatly improved this paper.
\end{acknowledgements}
 \subsection{Notation}
We adopt the following conventions:
\begin{itemize}
\item $\msf{Sp}$ will denote the $\infty$-category of spectra. The zero object in $\msf{Sp}$ will be denoted $*$ and the unit in $\msf{Sp}$ will be denoted $\bb{1}_\msf{Sp}$.
\item $\mathcal{V}$ will denote an arbitrary stable, symmetric monoidal, $\otimes$-presentable, $\infty$-category with its canonical enrichment over $\msf{Sp}$.
The unit of $\mathcal{V}$ will be denoted $\bb{1}_\mathcal{V}$. 
\item We implicitly regard ordinary categories as $\infty$-categories.
\end{itemize}
\section{Background on Operads}\label{background}
We review various definitions and concepts about operads for the reader's convenience and as a means of establishing conventions and notation. A reference for this material is \cite{Ching} or \cite[\S 1]{jnkfactualthesis}.

Let $\msf{Fin}^\mrm{bij}$ be the category of finite sets and bijections. The $\infty$-category of \emph{symmetric sequences in} $\msf{Sp}$ is the functor $\infty$-category ${\msf{Sseq}(\msf{Sp})\colonequals \msf{Fun}(\msf{Fin}^\mrm{bij},\msf{Sp})}$. Recall from \cite[\S 4.1.2]{Brantner} that $\msf{Sseq}(\msf{Sp})$ can be given the structure of a monoidal $\infty$-category as follows. View $\msf{Sseq}(\msf{Sp})$ as a symmetric monoidal $\infty$-category under Day convolution. Following \cite[\S 4.1.2]{Brantner}, there is an equivalence
\begin{align}\label{composition}
\msf{Fun}_{\msf{CAlg}(\mrm{Pr}^\mrm{L})}(\msf{Sseq}(\msf{Sp}),\msf{Sseq}(\msf{Sp}))\simeq\msf{Sseq}(\msf{Sp}).
\end{align}
Here $\mrm{Pr}^\mrm{L}$ is the $\infty$-category of presentable $\infty$-categories and functors between them which preserve small
colimits, with monoidal structure given by the tensor product of presentable $\infty$-categories. The $\infty$-category $\msf{CAlg}(\mrm{Pr}^\mrm{L})$ is the $\infty$-category of commutative algebra objects in $\mrm{Pr}^\mrm{L}$. 
Composition of functors in $\msf{CAlg}(\mrm{Pr}^\mrm{L})$ gives the left-hand side of (\ref{composition}) a (non-symmetric) monoidal structure. Under the equivalence (\ref{composition}), this monoidal structure becomes a (non-symmetric) monoidal structure on $\msf{Sseq}(\msf{Sp})$. The \emph{composition product} is the opposite of this monoidal structure on $\msf{Sseq}(\msf{Sp})$. 
We take this convention so that intuition agrees with the ordinary, non-infinity case, see \cite[Pg. 2]{Rune} or \cite[Def. 3.3]{PartitionLie}. 
In an ordinary symmetric monoidal category, the composition product $R\circ S$ of two symmetric sequences is
\begin{align}\label{rs}
(R\circ S)(n)=\bigoplus_i R(i)\otimes_{\Sigma_i}\left(\bigotimes_{j_1+\cdots+j_i=n} (S(j_1)\otimes\cdots\otimes S(j_i))\times_{\Sigma_{j_1}\times\cdots\times\Sigma_{j_i}}\Sigma_n\right).
\end{align}
The unit of the composition product, denoted $\mathcal{O}_{\msf{triv}}$, sends a finite set $B$ to the unit $\bb{1}_\msf{Sp}$ of $\msf{Sp}$ if $|B|=1$ and to the zero object $*$ of $\msf{Sp}$ otherwise. More recently, Haugseng \cite[\S 4]{Rune} has given an alternative description of the composition product on symmetric sequences and, for symmetric sequences in $\msf{Spaces}$, has shown that monoid objects in the category recovers Lurie's notion of $\infty$-operads, \cite[Def. 2.1.1.10]{HA}.
\begin{defn}
An \emph{operad} in $\msf{Sp}$ is a monoid object in $\msf{Sseq}(\msf{Sp})$. A \emph{cooperad} is a comonoid object in $\msf{Sseq}(\msf{Sp})$. 
\end{defn} 
An operad $\mathcal{O}$ in spectra has an underlying functor $\msf{Fin}^\mrm{bij}\rta\msf{Sp}$. For each $i\in\bb{N}$, we denote by $\mathcal{O}(i)$ the image of the finite set with $i$ elements $[i]$ under this functor.
\begin{ex}
The unit $\mathcal{O}_\msf{triv}$ has the structure of both an operad and a cooperad in $\msf{Sp}$ since $\mathcal{O}_{\msf{triv}}\circ\mathcal{O}_{\msf{triv}}\simeq\mathcal{O}_{\msf{triv}}$. We call $\mathcal{O}_{\msf{triv}}$ the \emph{trivial operad} or \emph{trivial cooperad}. Moreover, $\mathcal{O}_{\msf{triv}}$ is the initial object in the $\infty$-category of operads, and is the final object in the $\infty$-category of cooperads. We call the unique map $\iota\colon\mathcal{O}_{\msf{triv}}\rta\mathcal{O}$ of the trivial operad into a general operad $\mathcal{O}$ the \emph{unit map}. We call the unique map $\eta\colon\mathcal{P}\rta\mathcal{O}_{\msf{triv}}$ from a general cooperad $\mathcal{P}$ into the trivial cooperad the \emph{counit map}.
\end{ex}
\begin{defn}
An \emph{augmented operad} in $\msf{Sp}$ is an operad $\mathcal{O}$ in $\msf{Sp}$ together with a map of operads $\ep\colon\mathcal{O}\rta\mathcal{O}_{\msf{triv}}$ 
such that $\ep\circ \iota$ is the identity. 
An operad $\mathcal{O}$ is \emph{nonunital} if $\mathcal{O}(0)\simeq *$ and \emph{reduced} if the unit map induces an equivalence $\mathcal{O}(1)\simeq \bb{1}_\msf{Sp}$.

Similarly, a \emph{coaugmented cooperad} in $\msf{Sp}$ is a cooperad $\mathcal{P}$ in $\msf{Sp}$ together with a map of cooperads $e\colon\mathcal{O}_{\msf{triv}}\rta\mathcal{P}$ 
such that $\eta \circ e$ is the identity. 
 A cooperad $\mathcal{P}$ is \emph{nonunital} if $\mathcal{P}(0)\simeq *$ and \emph{reduced} if the counit map induces an equivalence $\mathcal{P}(1)\simeq \bb{1}_\msf{Sp}$. 
\end{defn}
\begin{convention}\label{oprd}
Throughout the rest of this note, (co)operads and are assumed to be nonunital and reduced. We use the notation $\msf{Oprd}$ for nonunital, reduced operads in $\msf{Sp}$ and $\msf{CoOprd}$ for the $\infty$-category of nonunital, reduced cooperads in $\msf{Sp}$.
\end{convention}
Let $\mathcal{V}$ be a symmetric monoidal, stable, $\otimes$-presentable, $\infty$-category with its canonical enrichment over $\msf{Sp}$. Here $\otimes$-presentable means that $\mathcal{V}$ is presentable and that the monoidal structure preserves colimits separately in each variable, see \cite[Def. 3.4]{AF1}. 

We would like to consider right modules and algebras in $\mathcal{V}$ over operads in $\msf{Sp}$. For this, we need to define an action of $\msf{Sseq}(\msf{Sp})$ on $\msf{Sseq}(\mathcal{V})$ and on $\mathcal{V}$.
\begin{lem}\label{sseq action}
Symmetric sequences $\msf{Sseq}(\msf{Sp})$ acts on $\mathcal{V}$ on the left and on $\msf{Sseq}(\mathcal{V})$ on the right. 
For $S\in\msf{Sseq}(\msf{Sp})$, $R\in\msf{Sseq}(\mathcal{V})$, and $V\in\mathcal{V}$, the object $R\circ S\in\msf{Sseq}(\mathcal{V})$ is given by (\ref{rs}), and 
$S\circ V$ in $\mathcal{V}$ is given by 
\begin{align}\label{eq-dpnil}
\bigoplus_{p\geq 0}S(p)\otimes_{\Sigma_p} V^{\otimes p}.
\end{align}
 \end{lem}
\begin{proof}
In \cite[\S 4.1.2]{Brantner}, 
a more general version of (\ref{composition}) is proven.
That is, for any $\otimes$-presentable symmetric monoidal $\infty$-category $\mathcal{D}$, there is an equivalence
\[ \msf{Fun}_{\msf{CAlg}(\mrm{Pr}^\mrm{L})}(\msf{Sseq}(\msf{Sp}),\mathcal{D})\simeq\mathcal{D}.\]
Take $\mathcal{D}=\msf{Sseq}(\mathcal{V})$ or $\mathcal{D}=\mathcal{V}$. By \cite[\S 4.7.3]{HA}, for any $\infty$-categories $\mathcal{C}$ and $\mathcal{C}'$, the functor $\infty$-category $\msf{Fun}(\mathcal{C},\mathcal{C})$ acts on the functor $\infty$-categories $\msf{Fun}(\mathcal{C},\mathcal{C}')$ and $\msf{Fun}(\mathcal{C}',\mathcal{C})$. Applying this to $\mathcal{C}=\msf{Sseq}(\msf{Sp})$, we obtain actions of 
\[\msf{Sseq}(\msf{Sp})\simeq\msf{Fun}_{\msf{CAlg}(\mrm{Pr}^\mrm{L})}(\msf{Sseq}(\msf{Sp}),\msf{Sseq}(\msf{Sp}))\]
on 
$\msf{Sseq}(\mathcal{V})\simeq\msf{Fun}_{\msf{CAlg}(\mrm{Pr}^\mrm{L})}(\msf{Sseq}(\msf{Sp}),\msf{Sseq}(\mathcal{V}))$ 
and on
$\mathcal{V}\simeq\msf{Fun}_{\msf{CAlg}(\mrm{Pr}^\mrm{L})}(\msf{Sseq}(\msf{Sp}),\mathcal{V})$. 
\end{proof}
\begin{ex}
For $R\in\msf{Sseq}(\mathcal{V})$ and $V\in\mathcal{V}$ we have identifications ${R\circ\mathcal{O}_\msf{triv}\simeq R}$ and $\mathcal{O}_{\msf{triv}}\circ V\simeq V$.
\end{ex}
\begin{defn}
Let $\mathcal{O}$ be an operad in $\msf{Sp}$ with multiplication map $m\colon\mathcal{O}\circ\mathcal{O}\rta\mathcal{O}$.
\begin{itemize}
\item Following \cite[Def. 4.2.1.14]{HA} together with \cite[Var. 4.2.1.36]{HA}, we write $\msf{RMod}_{\mathcal{O}}(\mathcal{V})$ for the $\infty$-category of right $\mathcal{O}$-modules in $\mathcal{V}$. We call an object of $\msf{RMod}_\mathcal{O}(\mathcal{V})$ a \emph{right $\mathcal{O}$-module in $\mathcal{V}$}. 
\item Following \cite[Def. 4.2.1.14]{HA}, we write $\msf{Alg}_{\mathcal{O}}(\mathcal{V})$ for the $\infty$-category of \emph{$\mathcal{O}$-algebras in $\mathcal{V}$}. This is the category of left $\mathcal{O}$-module objects in $\mathcal{V}$ using Lemma \ref{sseq action}. 
\end{itemize}
Let $\mathcal{P}$ be a cooperad in $\msf{Sp}$. Let $c\colon\mathcal{P}\rta\mathcal{P}\circ\mathcal{P}$ be the comonoid structure map.
\begin{itemize}
\item 
Using the fact that cooperads are operads in $\msf{Sp}^\mrm{op}$, we follow \cite[Def. 4.2.1.14]{HA}, and define the $\infty$-category of \emph{right $\mathcal{P}$-comodules in $\mathcal{V}$}, to be the opposite of the $\infty$-category of modules over the operad in $\msf{Sp}^\mrm{op}$. We denote the resulting $\infty$-category by $\msf{RCoMod}_{\mathcal{P}}(\mathcal{V})$. 
\item Using the action of Lemma \ref{sseq action}, we can define an $\infty$-category of left $\mathcal{P}$-comodules in $\mathcal{V}$. Following \cite[Def. 3.2.4]{FG}, we call this $\infty$-category the $\infty$-category of ind-nilpontent $\mathcal{P}$-coalgebras with divided powers in $\mathcal{V}$ and denote it by $\msf{CoAlg}^{\mrm{dp,nil}}_{\mathcal{P}}(\mathcal{V})$. \end{itemize}
\end{defn}
\begin{rmk}
We give imprecise descriptions of the objects just defined. Informally,
\begin{itemize}
\item a right $\mathcal{O}$-module in $\mathcal{V}$ is a symmetric sequence $M$ in $\mathcal{V}$ together with a map of symmetric sequences $a\colon M\circ \mathcal{O}\rta M$, chosen homotopies making the following two diagrams commute:\\
(associativity) \[\xymatrix{
M\circ\mathcal{O}\circ\mathcal{O}\arw[rr]^{\mrm{Id}_M\circ m}\arw[d]_{a\circ\mrm{Id}_\mathcal{O}} & & M\circ\mathcal{O}\arw[d]^a\\
M\circ\mathcal{O}\arw[rr]_a &  & M
}\]
(unit) \[\xymatrix{
M\circ\mathcal{O}_\msf{triv}\arw[d]_\simeq\arw[rr]^{\mrm{Id}_M\circ\iota} &  &M\circ\mathcal{O}\arw[dll]^a\\
M & &
}\]
and infinitely many higher homotopies.
\item an $\mathcal{O}$-algebra in $\mathcal{V}$ is an object $A\in\mathcal{V}$ together with a map $\mathcal{O}\circ A\rta A$ satisfying associativity and unit conditions. 
 \item a right $\mathcal{P}$-comodule in $\mathcal{V}$ is a symmetric sequence $W$ in $\mathcal{V}$ together with a map ${b\colon W\rta W\circ\mathcal{P}}$, chosen homotopies making the following two diagrams commute:\\
(coassociativity)
\[\xymatrix{
W\arw[r]^b\arw[d]^-b & W\circ P\arw[d]^{b\circ\mrm{Id}_\mathcal{P}}\\
W\circ\mathcal{P}\arw[r]_-{\mrm{Id}_W\circ c} & W\circ\mathcal{P}\circ\mathcal{P}
}\]
(counit)
\[\xymatrix{
W\circ\mathcal{P}\arw[rr]^{\mrm{Id}_W\circ\eta} & & W\circ\mathcal{O}_\msf{triv}\arw[d]^\simeq\\
& & W\arw[ull]^b
}\]
and infinitely many higher homotopies. See also \cite[Def. 5.2.10]{Brantner}.
\item a $\mathcal{P}$-coalgebra in $\mathcal{V}$ is an object $C\in\mathcal{V}$ together with a map $C\rta\mathcal{P}\circ C$ satisfying coassociativity and counit conditions.
\end{itemize}
\end{rmk}
Note that since our (co)operads are assumed to be nonunital, (co)algebras over a (co)operad have a zero object and are therefore uniquely (co)augmented, see \cite[\S 1]{ChingHarper}. 
\begin{ex}\label{triv operad alg}
The $\infty$-category $\msf{RMod}_{\mathcal{O}_\msf{triv}}(\mathcal{V})$ is equivalent to the $\infty$-category $\msf{Sseq}(\mathcal{V})$, \cite[Prop. 4.2.4.9]{HA}. Similarly, there are equivalences 
$\msf{RCoMod}_{\mathcal{O}_\msf{triv}}(\mathcal{V})\simeq \msf{Sseq}(\mathcal{V})$ 
and for (co)algebras
$\msf{Alg}_{\mathcal{O}_\msf{triv}}(\mathcal{V})\simeq\mathcal{V}$ and 
$\msf{CoAlg}_{\mathcal{O}_\msf{triv}}(\mathcal{V})\simeq \mathcal{V}$. 
\end{ex}
Let $f\colon\mathcal{O}\rta\mathcal{O}'$ be a map of operads in $\msf{Sp}$. Using \cite[Cor. 4.2.3.2]{HA}, precomposition defines a functor ${f^*\colon\msf{Alg}_{\mathcal{O}'}(\mathcal{V})\rta\msf{Alg}_{\mathcal{O}}(\mathcal{V})}$. 
By \cite[Prop. 3.2.2.1]{HA}, 
$f^*$ preserves limits. 
By \cite[Cor. 3.9]{Rune2}, the category $\msf{Alg}_\mathcal{O}(\mathcal{V})$ is presentable. 
By the adjoint functor theorem \cite[Thm. 5.5.2.9]{HTT}, 
the functor $f^*$ has a left adjoint $f_!$ given by ``induction," $\mathcal{O}'\circ_{\mathcal{O}}(-)$. 
\begin{ex}\label{triv and free}
Let $\mathcal{O}$ be an augmented operad in $\msf{Sp}$ with unit $\iota$ and augmentation $\ep$. 
These maps induce adjunctions
\[\xymatrix{
\msf{Alg}_{\mathcal{O}_{\msf{triv}}}(\mathcal{V})\arw[r]_{\ep^*} & \msf{Alg}_{\mathcal{O}}(\mathcal{V})\arw@/_1pc/[l]_{\ep_!}\arw[r]_{\iota^*} & \msf{Alg}_{\mathcal{O}_{\msf{triv}}}(\mathcal{V})\arw@/_1pc/[l]_{\iota_!}}.\]
\noindent For $A\in\msf{Alg}_\mathcal{O}(\mathcal{V})$ and ${V\in\msf{Alg}^\mrm{aug}_{\mathcal{O}_\msf{triv}}(\mathcal{V})}$, we set the following terminology:
\begin{itemize}
\item  We call $\ep_!A$ the \emph{cotangent complex} of $A$. This is sometimes denoted $LA$, for example in \cite[Def. 1.5.4]{PKD}.
\item We refer to $\iota^*A$ as the \emph{underlying} object of $A$ in $\mathcal{V}$.
\item We call $\iota_!V$ the \emph{free} $\mathcal{O}$-algebra on $V$. The free $\mathcal{O}$-algebra on $V$ is given by the object $\mathcal{O}\circ V$ in $\mathcal{V}$ with structure map $\mathcal{O}\circ(\mathcal{O}\circ V)\rta \mathcal{O}\circ V$ induced from the structure map $\mathcal{O}\circ\mathcal{O}\rta\mathcal{O}$ for the operad $\mathcal{O}$. 
\item The $\mathcal{O}$-algebra $\ep^*V$ is the \emph{trivial} $\mathcal{O}$-algebra on $V$.
\end{itemize}
In particular, since the unit for the trivial operad is the identity map, every $\mathcal{O}_\msf{triv}$-algebra is free.
\end{ex}
\noindent The analogous story for coalgebras is more complicated, see \cite[\S 3.5]{FG}. 
In particular, 
if one uses the notion of coalgebras without the divided powers and ind-nilpotent additions, cofree coalgebras do not exist in general, \cite[Rmk. 3.5.2]{FG}. 
The essential issue is that the action of symmetric sequences on $\cal{V}$ defining just a coalgebra is only a right \emph{lax} action.
This issue does not appear when working with ind-nilpotent coalgebras with divided powers, as we do here. 

We will use the following two results. For $\mathcal{P}$ a cooperad, 
let $S_\mathcal{P}$ be the comonad from the action of symmetric sequences on $\mathcal{V}$ from Lemma \ref{sseq action}, see \cite[\S 3.2.1]{FG}. 
\begin{lem}\label{SP is accessible}
Let $\mathcal{P}\in\msf{CoOprd}$. 
The functor 
$S_\mathcal{P}\otimes(-)\colon\mathcal{V}\rta\mathcal{V}$
is accessible.
\end{lem}
\begin{proof}
By assumption, $\mathcal{V}$ is presentable, and hence accessible.
We need to check that $S_\mathcal{P}\otimes(-)$ preserves filtered colimits. 
This follows from the fact that the action (\ref{eq-dpnil}) defining $S_\mathcal{P}$ consists of taking a direct sum, homotopy orbits, tensor powers, and smashing with a fixed spectrum, 
all of which preserve filtered colimits. 
\end{proof}
\begin{prop}\label{left comod is accessible}
Let $\mathcal{V}$ be left-tensored over a monoidal category $\mathcal{C}$.
Suppose $S$ is a comonoid object of $\mathcal{C}$
and that
 $S\otimes(-)\colon\mathcal{V}\rta\mathcal{V}$
is accessible.
Then $\msf{LCoMod}_S(\mathcal{V})$ is accessible.
\end{prop}
We will apply this proposition when $\mathcal{C}=\msf{Sseq}(\mrm{Sp})$ and $S=S_\mathcal{P}$. 
Our proof is modeled off of the proof of \cite[Prop. 4.2.3.4]{HA}.  
Therein, what we refer to as a comonoid object is called a coalgebra object. 
\begin{proof}
The action of $\mathcal{C}$ on $\mathcal{V}$ can be encoded as a functor
$\Delta^\mrm{op}\times[1]\rta\msf{Cat}_\infty$.
Informally this sends 
the object $([n],0)$ to $\mathcal{C}^{\times n}\times\mathcal{V}$ 
and the object $([n],1)$ to $\mathcal{C}^{\times n}$.
This functor describes a natural transformation between two functors $\Delta^\mrm{op}\rta\msf{Cat}_\infty$. 
By unstraightening, this natural transformation classifies a map of cartesian fibrations 
$q\colon\mathcal{V}_\otimes\rta\mathcal{C}_\otimes$. 
By \cite[Prop. 2.4.2.11]{HTT}, $q$ is locally cartesian if it is fiberwise.
On the fiber over $[n]$, $q$ is equivalent to the projection 
$\mathcal{C}^{\times n}\times\mathcal{V}\rta\mathcal{C}^{\times n}$,
which is locally cartesian.

The coalgebra $S$ is encoded by a functor 
$S\colon\Delta\rta\mathcal{C}_\otimes$. 
Pulling the locally cartesian fibration $q$ back along $S$ 
yields a locally cartesian fibration
$p\colon\mathcal{V}_{\otimes}\times_{\Delta}\mathcal{C}_{\otimes}\rta\Delta$.
Since $S$ is accessible, 
the associated functors to morphisms in $\Delta$ on fibers of $p$ are accessible. 
We may identify the category $\msf{LCoMod}_S(\mathcal{V})$ 
with the category of sections of $p$
which send inert morphisms in $\Delta$ (\cite[Def. 4.1.3.1]{HA}) to locally $p$-cartesian edges in ${\mathcal{V}_\otimes\times_\Delta\mathcal{C}_{\otimes}}$.
Using the opposite version \cite[Rmk. 5.4.7.14]{HTT} of \cite[Prop. 5.4.7.11]{HTT} for accessible categories (\cite[Rmk. 5.4.7.13]{HA}),
we obtain that $\msf{LCoMod}_S(\mathcal{V})$ is accessible.
\end{proof}
\begin{cor}\label{present}
Let $\mathcal{P}\in\msf{CoOprd}$. The category $\msf{CoAlg}^\mrm{ dp,nil}_{\mathcal{P}}(\mathcal{V})$ is presentable.
\end{cor}
\begin{proof}
By \cite[Cor. 3.2.2.5]{HA}, the category $\msf{CoAlg}^\mrm{ dp,nil}_{\mathcal{P}}(\mathcal{V})$ has all colimits. 
It suffices to show that $\msf{CoAlg}^\mrm{ dp,nil}_{\mathcal{P}}(\mathcal{V})$ is accessible.
By definition, 
$\msf{CoAlg}^\mrm{ dp,nil}_{\mathcal{P}}(\mathcal{V})$ is the category $\msf{LCoMod}_{S_\mathcal{P}}(\mathcal{V})$ of left comodules over $S_\mathcal{P}$. 
By Lemma \ref{SP is accessible}, $S_\mathcal{P}$ is accessible.
Proposition \ref{left comod is accessible} now gives the desired result.
\end{proof}

A map $g:\mathcal{P}\rta\mathcal{P}'$ in $\msf{CoOprd}$ induces a map
${g^\flat:\msf{CoAlg}^\mrm{dp,nil}_{\mathcal{P}}(\mathcal{V})\rta\msf{CoAlg}^\mrm{dp,nil}_{\mathcal{P}'}(\mathcal{V})}$. 
Heuristically, the functor $g^\flat$ takes a $\mathcal{P}$-coalgebra $C$ with action map $b\colon C\rta\mathcal{P}\circ C$ to the $\mathcal{P}'$-coalgebra with action map 
\[C\xrta{b}\mathcal{P}\circ C\xrta{g\circ \mrm{Id}}\mathcal{P}'\circ C.\]
Note that $g^\flat$ commutes with colimits. 
By Corollary \ref{present}, we may apply the adjoint functor theorem to obtain a right adjoint $g_\sharp$ to $g^\flat$. 
The cases we use below, when either $\mathcal{P}$ or $\mathcal{P}'$ are $\mathcal{O}_\msf{triv}$, are described in \cite[Eq. 3.2.7]{FG}.
\begin{ex}\label{cotriv and cofree}
Let $\mathcal{P}$ be a coaugmented cooperad in $\msf{Sp}$ with counit $\eta$ and coaugmentation $e$. 
These maps induce adjunctions
\[\xymatrix{
\msf{CoAlg}^\mrm{dp,nil}_{\mathcal{O}_{\msf{triv}}}(\mathcal{V})\arw[r]^{e^\flat}& \msf{CoAlg}^\mrm{dp,nil}_{\mathcal{P}}(\mathcal{V})\arw@/^1pc/[l]^{e_\sharp}\arw[r]^{\eta^\flat} & \msf{CoAlg}^\mrm{dp,nil}_{\mathcal{O}_{\msf{triv}}}(\mathcal{V})\arw@/^1pc/[l]^{\eta_\sharp}
}.\]
\noindent For $C\in\msf{CoAlg}^\mrm{dp,nil}_\mathcal{P}(\mathcal{V})$ and $W\in\msf{CoAlg}^\mrm{dp,nil}_{\mathcal{O}_\msf{triv}}(\mathcal{V})$, we set the following terminology:
\begin{itemize}
\item We call $\eta^\flat C$ the \emph{underlying object} of  $C$ in $\mathcal{V}$.
\item We call $\eta_\sharp W$ the \emph{cofree} $\mathcal{P}$-algebra on $W$.
\item The $\mathcal{P}$-coalgebra $e^\flat W$ is the \emph{trivial} $\mathcal{P}$-coalgebra on $W$.
\end{itemize}
In particular, since the counit for the trivial cooperad is the identity map, every $\mathcal{O}_\msf{triv}$-coalgebra is cofree.
\end{ex}
\begin{lem}\label{adjoint as cobar}
Let $g:\mathcal{P}\rta\mathcal{P}'$ be a map of cooperads. 
Let $C\in\msf{CoAlg}^\mrm{dp,nil}_\mathcal{P'}(\mathcal{V})$. 
Then $g_\sharp(C)$ is given by the totalization of the cobar complex,
\[g_\sharp(C)=\msf{Tot}(\msf{Cobar}^\bullet(\mathcal{P},\mathcal{P}',C)).\]
\end{lem}
\begin{proof}
Since $g_\sharp$ is a right adjoint, it commutes with all limits. 
Let $V\in\mathcal{V}$ and consider the cofree $\mathcal{P}'$ coalgebra $\eta_\sharp(V)$. 
By functoriality, $g_\sharp(\eta_\sharp V)=(\eta\circ g)_\sharp(V)$. 
Since $\eta\circ g$ is the counit of $\mathcal{P}$, this $(\eta\circ g)_\sharp(V)$ is the cofree $\mathcal{P}$-coalgebra. 
By \cite[Lem. 6.1.3.16]{HTT}, the totalization of $\msf{Cobar}^\bullet(\mathcal{P},\mathcal{P}',\eta_\sharp V)$ is equivalent to $\mathcal{P}\circ V$, the cofree coalgebra on $V$. 
Thus the result holds for cofree coalgebras. 
By \cite[Prop. 4.7.3.14]{HA}, every coalgebra has a resolution by cofree coalgebras. 
The result follows. 
\end{proof}
\begin{rmk} The above constructions of restriction, induction, (co)free, trivial, et cetera, have analogues for right (co)modules. 
\end{rmk}
\subsection{Bar Construction}
Let $\mathcal{M}$ be a monoidal $\infty$-category with unit $\bb{1}$ and monoidal structure denoted $(-)\circ(-)$. Assume that $\mathcal{M}$ admits geometric realizations of simplicial objects and totalizations of cosimplicial objects. The \emph{bar construction} on an augmented monoid object $X\in\mathcal{M}$ is given by $\msf{Bar}_\mathcal{M}(X)=\bb{1}\circ_X\bb{1}$. The object $\msf{Bar}_\mathcal{M}(X)$ can be given the structure of a comonoid object in $\mathcal{M}$. Moreover, we have the following:
\begin{thm}\label{bar}
Let $\mathcal{M}$ be a monoidal $\infty$-category with unit $\bb{1}$ that admits geometric realization of simplicial objects and totalizations of cosimplicial objects.
The bar construction admits a right adjoint,
\[\msf{Bar}_\mathcal{M}\colon\msf{Mon}^\mrm{aug,red,nu}(\mathcal{M})\rightleftarrows\msf{CoMon}^\mrm{aug,red,nu}(\mathcal{M}):\msf{Cobar}_\mathcal{M}.\]
 \end{thm}
 See \cite[Rmk. 5.2.2.19]{HA} for the existence of $\msf{Cobar}_\mathcal{M}$ and the adjoint property. See also \cite[Prop. 2.33]{FrancisThesis}.
 The bar construction can be realized as the geometric realization of the two-sided bar construction $|\msf{Bar}_\bullet(\bb{1},(-),\bb{1})|$, see  \cite[Rmk. 5.2.2.8]{HA}. The right adjoint of the bar construction ${\msf{Cobar}_\mathcal{M}\colon\mathcal{M}\rta\mathcal{M}}$, is called the \emph{cobar construction}. The cobar construction for $\mathcal{M}$ can be realized as the totalization of the two-sided cobar construction $\msf{Tot}(\msf{Cobar}^\bullet(\bb{1},(-),\bb{1}))$, see \cite[Rmk. 5.2.2.15(c)]{HA}.
\begin{ex}
Take $\mathcal{M}$ to be the monoidal $\infty$-category of symmetric sequences in $\msf{Sp}$ under the composition product. We will drop the $\infty$-category $\msf{Sseq}(\msf{Sp})$ from the notation in the bar and cobar constructions for $\msf{Sseq}(\msf{Sp})$. Thus, the bar construction takes a coaugmented operad $\mathcal{O}$ to an augmented cooperad $\msf{Bar}(\mathcal{O})$ and the cobar construction takes a coaugmented cooperad $\mathcal{P}$ to an augmented operad $\msf{Cobar}(\mathcal{P})$. This example is \cite[Cor. 2.34]{FrancisThesis}.
\end{ex}
\begin{ex}
The bar construction takes the trivial operad to the trivial cooperad, ${\msf{Bar}(\mathcal{O}_\msf{triv})\simeq\mathcal{O}_\msf{triv}}$. Indeed, $\mathcal{O}_\msf{triv}$ is the unit in $\msf{Sseq}(\msf{Sp})$ so that the bar construction is given by 
$\msf{Bar}(\mathcal{O}_{\msf{triv}})=\mathcal{O}_\msf{triv}\circ_{\mathcal{O}_\msf{triv}}\mathcal{O}_\msf{triv}\simeq\mathcal{O}_\msf{triv}$.
\end{ex}
The following is the main theorem of \cite{ChingBar}.

\begin{thm}[Ching]\label{bar cobar is equiv}
The bar-cobar adjunction is an equivalence of $\infty$-categories,
\[\msf{Bar}\colon\msf{Mon}^\mrm{aug,red,nu}(\msf{Sseq}(\msf{Sp}))\rightleftarrows \msf{CoMon}^\mrm{aug,red,nu}(\msf{Sseq}(\msf{Sp}))\colon\msf{Cobar}.\]
\end{thm}
\noindent In \cite{ChingBar}, the theorem statement does not explicitally state the nonunital assumption. This is because all operads are nonunital, since they work with \emph{nonempty} finite sets in their definition of symmetric sequences.

 In particular, for any nonunital, reduced, augmented operad $\mathcal{O}$, there is an equivalence $\msf{Cobar}\msf{Bar}(\mathcal{O})\simeq\mathcal{O}$. 

We can extend the notion of the bar construction to the level of modules and algebras over an operad, and similarly for cooperads. In this setting, the bar construction on $\mathcal{O}$-algebras will land in $\msf{CoAlg}_{\msf{Bar}(\mathcal{O})}^{\mrm{dp,nil}}(\mathcal{V})$.
\begin{thm}
Let $\mathcal{O}$ be an augmented operad in $\msf{Sp}$. There are adjunctions 
\begin{itemize}
\item for $\mathcal{O}$-algebras,
\[\msf{Bar}_\mathcal{O}\colon\msf{Alg}_\mathcal{O}(\mathcal{V})\rightleftarrows\msf{CoAlg}^\mrm{dp,nil}_{\msf{Bar}(\mathcal{O})}(\mathcal{V}):\msf{Cobar}_{\msf{Bar}(\mathcal{O})}\]
\item for right $\mathcal{O}$-modules,
 \[\msf{Bar}_\mathcal{O}\colon\msf{RMod}_\mathcal{O}(\mathcal{V})\rightleftarrows\msf{RCoMod}_{\msf{Bar}(\mathcal{O})}(\mathcal{V}):\msf{Cobar}_{\msf{Bar}(\mathcal{O})}.\]
\end{itemize}
\end{thm}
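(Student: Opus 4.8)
The plan is to obtain both adjunctions as the module- and comodule-level versions of the bar/cobar adjunction of Theorem~\ref{bar}, applied to the monoidal $\infty$-category $\msf{Sseq}(\msf{Sp})$ acting on $\cal{V}$ and on $\msf{Sseq}(\cal{V})$ through Lemma~\ref{sseq action}. First I would record the identifications that make this precise: $\msf{Alg}^\mrm{aug}_\cal{O}(\cal{V})$ is the $\infty$-category of augmented left $\cal{O}$-module objects in $\cal{V}$, and---by the definition recalled above, following \cite[Defn. 3.2.4]{FG}---$\msf{CoAlg}^\mrm{aug,dp,nil}_{\msf{Bar}(\cal{O})}(\cal{V})$ is the $\infty$-category of coaugmented left $\msf{Bar}(\cal{O})$-comodule objects in $\cal{V}$; likewise $\msf{RMod}_\cal{O}(\cal{V})$ and $\msf{RCoMod}_{\msf{Bar}(\cal{O})}(\cal{V})$ are the right modules, resp.\ right comodules, internal to $\msf{Sseq}(\cal{V})$. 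The unit $\iota\colon\cal{O}_\msf{triv}\rta\cal{O}$ and augmentation $\ep\colon\cal{O}\rta\cal{O}_\msf{triv}$ exhibit $\cal{O}_\msf{triv}$ as an $\cal{O}$-bimodule in $\msf{Sseq}(\msf{Sp})$, with $\msf{Bar}(\cal{O})=\cal{O}_\msf{triv}\circ_\cal{O}\cal{O}_\msf{triv}$ the associated cooperad.

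Next I would define the functors via relative (co)composition products. For an augmented $\cal{O}$-algebra $A$, set
\[\msf{Bar}_\cal{O}(A)\colonequals\cal{O}_\msf{triv}\circ_\cal{O}A\simeq\bigl|\msf{Bar}_\bullet(\cal{O}_\msf{triv},\cal{O},A)\bigr|,\]
the geometric realization of the two-sided bar construction, which exists since $\cal{V}$ admits sifted colimits. The $\cal{O}$-bimodule structure on $\cal{O}_\msf{triv}$ equips this object with a left $\msf{Bar}(\cal{O})$-comodule structure by the usual bar-construction yoga (cf.\ \cite[\S 5.2.2]{HA}), so that $\msf{Bar}_\cal{O}$ takes values in $\msf{CoAlg}^\mrm{aug,dp,nil}_{\msf{Bar}(\cal{O})}(\cal{V})$. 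Dually, for a comodule $C$ set
\[\msf{Cobar}_{\msf{Bar}(\cal{O})}(C)\colonequals\msf{Tot}\bigl(\msf{Cobar}^\bullet(\cal{O}_\msf{triv},\msf{Bar}(\cal{O}),C)\bigr),\]
a totalization, which exists because $\cal{V}$, being presentable, is complete; it carries the $\cal{O}$-algebra structure dual to the comodule bookkeeping above, using the equivalence $\msf{Cobar}\msf{Bar}(\cal{O})\simeq\cal{O}$ of Theorem~\ref{bar cobar is equiv}. The right-module statement is proved by running the identical construction inside $\msf{Sseq}(\cal{V})$ in place of $\cal{V}$.

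To establish $\msf{Bar}_\cal{O}\dashv\msf{Cobar}_{\msf{Bar}(\cal{O})}$ I would repeat the proof of Theorem~\ref{bar} in this relative setting, i.e.\ the module-category version of \cite[Rmk. 5.2.2.19]{HA} (see also \cite[\S 2]{FrancisThesis}): one compares the space of maps out of the two-sided bar simplicial object with the space of maps into the two-sided cobar cosimplicial object and observes that both compute the same limit over $\mathbf\Delta$. In the algebra case one may alternatively note that $\msf{Bar}_\cal{O}$ preserves all colimits---it is assembled from the composition product (which preserves colimits separately in each variable) and a geometric realization, while its target has colimits created by the forgetful functor to $\cal{V}$, a left adjoint by Example~\ref{cotriv and cofree}---so that, the relevant $\infty$-categories being presentable by \cite{FG} (cf.\ \cite[\S 4.1.2, \S 5.2]{Brantner}), the adjoint functor theorem \cite[Thm. 5.5.2.9]{HTT} produces a right adjoint, which one then identifies with the cobar formula. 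I expect the only points requiring genuine care, both already handled in \cite{HA} and \cite{FG}, to be: making precise the relative composition product $\circ_\cal{O}$ and the induced left $\msf{Bar}(\cal{O})$-comodule structure on the bar construction at the level of module categories over a monoidal $\infty$-category, and the standard identification of augmented $\cal{O}$-algebras with the (nonunital) module objects on which the two-sided bar construction is modeled.
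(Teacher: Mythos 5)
The paper does not give its own proof of this theorem: it defers to Francis--Gaitsgory (\cite[Cor. 3.3.5]{FG} and \cite[Eq. 3.8]{FG}, where $\msf{Bar}_\cal{O}$ appears as $\msf{Bar}^\mrm{enh}_\cal{O}$) for the algebra-level adjunction and to Ching (\cite[\S 7.3]{Ching}) for the comodule structure on the bar construction of a right module. Your construction of the functors via relative (co)tensor products, the identification of $\msf{Bar}(\cal{O})$ as $\cal{O}_\msf{triv}\circ_\cal{O}\cal{O}_\msf{triv}$, and the first route you give to the adjunction --- comparing mapping spaces out of the two-sided bar simplicial object with mapping spaces into the two-sided cobar cosimplicial object in the manner of \cite[Rmk. 5.2.2.19]{HA} --- are the right approach and consistent with what those references actually do.

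Your alternative argument via the adjoint functor theorem, however, contains a genuine error. You assert that $\msf{Bar}_\cal{O}$ preserves all colimits because it ``is assembled from the composition product (which preserves colimits separately in each variable) and a geometric realization.'' The composition product is \emph{not} colimit-preserving in its second variable: as recorded after Lemma~\ref{sseq action}, $\cal{O}\circ V\simeq\bigoplus_p\cal{O}(p)\otimes_{\Sigma_p}V^{\otimes p}$, and the tensor powers $V^{\otimes p}$ prevent $\cal{O}\circ(-)$ from commuting with coproducts. Since in $\msf{Bar}_\cal{O}(A)=\cal{O}_\msf{triv}\circ_\cal{O}A$ it is precisely that second slot which is being varied, the levelwise functors $\cal{O}^{\circ n}\circ(-)$ of the bar simplicial object are not colimit-preserving, and the reason you state does not establish the hypothesis of the adjoint functor theorem. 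The conclusion that $\msf{Bar}_\cal{O}$ is a left adjoint is nonetheless correct, but a correct argument must use that colimits in $\msf{Alg}^\mrm{aug}_\cal{O}(\cal{V})$ are themselves computed monadically: every augmented $\cal{O}$-algebra is a sifted colimit of free ones, the levelwise functors \emph{do} preserve sifted colimits, and on free algebras $\msf{Bar}_\cal{O}(\cal{O}\circ V)\simeq V$ with its trivial coalgebra structure (Example~\ref{koszul of triv is free}), so coproducts of frees are preserved because the trivial-coalgebra functor $e^\flat$ is a left adjoint. Your first argument sidesteps the issue entirely and should be preferred.
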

For the statement on the level of $\mathcal{O}$-algebras, see \cite[Cor. 3.3.5]{FG} or \cite[Eq. 3.8]{FG} where $\msf{Bar}_\mathcal{O}$ is written $\msf{Bar}_\mathcal{O}^\mrm{enh}$. For the right comodule structure on the bar construciton applied to a right module, see \cite[\S 7.3]{Ching}.
\begin{lem}\label{bar for triv}
Under the identifications 
$\msf{Alg}_{\mathcal{O}_\msf{triv}}(\mathcal{V})\simeq\mathcal{V}\simeq\msf{CoAlg}_{\mathcal{O}_{\msf{triv}}}(\mathcal{V})$, 
the functor $\msf{Bar}_{\mathcal{O}_{\msf{triv}}}$ is the identity on $\mathcal{V}$. 
Under the identifications of right $\mathcal{O}_\msf{triv}$-(co)modules with $\msf{Sseq}(\mathcal{V})$, 
the functor $\msf{Bar}_{\mathcal{O}_{\msf{triv}}}$ is the identity on $\msf{Sseq}(\mathcal{V})$.
\end{lem}
\begin{proof}
By \cite[Rmk. 4.4.2.9]{HA}, there are equivalences 
\[\msf{Bar}_{\mathcal{O}_\msf{triv}}(A)\simeq \mathcal{O}_\msf{triv}\circ_{\mathcal{O}_\msf{triv}}A \simeq \mathcal{O}_{\msf{triv}}\circ A\simeq A.\]
In the right module case,
we have
\[\msf{Bar}_{\mathcal{O}_\msf{triv}}(R)\simeq R\circ_{\mathcal{O}_\msf{triv}}\mathcal{O}_\msf{triv}\simeq R\circ\mathcal{O}_{\msf{triv}}\simeq R.\qedhere\]
\end{proof}
In Theorem \ref{level 2} and Theorem \ref{level 3} below, we will use how the bar and cobar construction interact with restriction and induction morphisms. The following is a corollary of \cite[Lem. 6.2.6]{FG}.
\begin{lem}\label{bar and left}
Let $r\colon\mathcal{O}\rta\mathcal{O}'$ be a morphism of augmented operads in $\msf{Sp}$. Also let $r$ denote the morphism $\msf{Bar}(\mathcal{O})\rta\msf{Bar}(\mathcal{O}')$. There is an equivalence 
$\msf{Bar}_{\mathcal{O}'}r_!\simeq r^\flat\msf{Bar}_\mathcal{O}$ 
of functors ${\msf{Alg}_\mathcal{O}(\mathcal{V})\rta\msf{CoAlg}_{\msf{Bar}(\mathcal{O})'}^\mrm{dp,nil}(\mathcal{V})}$.
\end{lem} 

\begin{ex}
Take $r$ to be the augmentation map $\ep\colon\mathcal{O}\rta\mathcal{O}_\msf{triv}$. Then $\msf{Bar}(\mathcal{O}_{\msf{triv}})\simeq\mathcal{O}_\msf{triv}$ and the induced map of cooperads $\msf{Bar}(\mathcal{O})\rta\mathcal{O}_{\msf{triv}}$ is the counit $\eta$ of $\msf{Bar}(\mathcal{O})$. 
Using Lemma \ref{bar for triv}, $\msf{Bar}_{\mathcal{O}_\msf{triv}}$ sends an $\mathcal{O}_\msf{triv}$ algebra to itself. 
Lemma \ref{bar and left} then reads 
${\ep_!A\simeq \eta^\flat\msf{Bar}_\mathcal{O}A}$. 
The left-hand side is the cotangent complex, also denoted $LA$. The right-hand side is the underlying object of $\msf{Bar}_\mathcal{O}A$. For $\mathcal{O}=\mathcal{E}_n$, the little $n$-disks operad, one should compare this to \cite[Cor. 2.29]{FrancisThesis}.
\end{ex}
\begin{cor}\label{cobar and right}
With notation as in Lemma \ref{bar and left}, 
there is an equivalence of functors $\msf{CoAlg}_{\msf{Bar}(\mathcal{O}')}(\mathcal{V})\rta\msf{Alg}_{\mathcal{O}}(\mathcal{V})$, 
\[r^*\msf{Cobar}_{\msf{Bar}(\mathcal{O}')}\simeq \msf{Cobar}_{\msf{Bar}(\mathcal{O})}r_\sharp .\]
\end{cor}
\begin{proof}
By Lemma \ref{bar and left}, there is an equivalence 
$\msf{Bar}_{\mathcal{O}'}(r_!A)\simeq r^\flat\msf{Bar}_\mathcal{O} A$. 
Thus the right adjoint of $\msf{Bar}_{\mathcal{O}'}\circ r_!$ is equivalent to the right adjoint of $r^\flat\circ\msf{Bar}_{\mathcal{O}}$. Since the right adjoint of the composition is the composition of the right adjoints, we have an equivalence
$r^*\circ\msf{Cobar}_{\msf{Bar}(\mathcal{O}')}\simeq \msf{Cobar}_{\msf{Bar}(\mathcal{O})}\circ r_\sharp$.
\end{proof}
For the right module case, we will only need the following.
\begin{lem}\label{cobar comod}
Let $\mathcal{O}\in\msf{Oprd}$ be augmented with augmentation map $\ep$. Let $\eta$ denote the counit of $\msf{Bar}\mathcal{O}$. 
For $S\in\msf{Sseq}(\mathcal{V})$, we have an equivalence of right $\msf{Bar}\mathcal{O}$-comodules,  
$\msf{Bar}_\mathcal{O}(\ep^* S)\simeq \eta_\sharp \msf{Bar}_{\mathcal{O}_\msf{triv}}S$ 
and right $\mathcal{O}$-modules,
$\msf{Cobar}_{\msf{Bar}\mathcal{O}}(\eta_\sharp S)\simeq \ep^*\msf{Cobar}_{\mathcal{O}_\msf{triv}}(S)$.
\end{lem}
\begin{proof}
The trivial right $\mathcal{O}$-comodule $\ep^*S$ is equivalent to $S\circ\mathcal{O}_\msf{triv}$. 
By associativity \cite[Prop. 4.4.3.14]{HA}, we have
\[\msf{Bar}_\mathcal{O}(\ep^*S)=(S\circ\mathcal{O}_{\msf{triv}})\circ_\mathcal{O}\mathcal{O}_\msf{triv}\simeq S\circ(\mathcal{O}_{\msf{triv}}\circ_\mathcal{O}\mathcal{O}_\msf{triv})\simeq S\circ\msf{Bar}\mathcal{O}.\]
By Lemma \ref{bar for triv}, 
$S$ is equivalent to $\msf{Bar}_{\mathcal{O}_\msf{triv}}S$. 
Thus, the cofree right $\msf{Bar}\mathcal{O}$-comodule, $\eta_\sharp \msf{Bar}_{\mathcal{O}_\msf{triv}}S$, is given by $S\circ\msf{Bar}\mathcal{O}$. 

By replacing $\msf{Cobar}$, $\eta_\sharp$, and $\ep^*$ by their left adjoints, the second claim in the lemma statement follows from the equivalence, for any right $\mathcal{O}$-module $M$,
\[\eta^\flat\msf{Bar}_\mathcal{O}M=M\circ_\mathcal{O}\mathcal{O}_\msf{triv}=\ep_!M=\msf{Bar}_{\mathcal{O}_{\msf{triv}}}(\ep_!M).\qedhere\]
\end{proof}
\section{Underlying Filtration on Operads and Right Modules}\label{underlying}
In this section, we define a filtration on operads and a filtration on cooperads.
 The goal of this section is to prove that the bar construction interchanges these filtrations. The filtration on operads considered here is also studied in \cite{Heuts}, \cite{KP}, \cite{Pereira}, and \cite{HarperHess}. We also define a similar filtration on right modules in $\spv$.

Let $\msf{Fin}^\mrm{bij}_{\leq k}\subset\msf{Fin}^\mrm{bij}$ be the full $\infty$-subcategory spanned by those finite sets of cardinality less than or equal to $k$. Let $\msf{Sseq}_{\leq k}(\spv)$ be the functor $\infty$-category $\msf{Fun}(\msf{Fin}^\mrm{bij}_{\leq k},\spv)$. Note that there is a restriction functor $r(k)^*:\msf{Sseq}(\spv)\rta\msf{Sseq}_{\leq k}(\spv)$. Since limits and colimits in the functor $\infty$-category $\msf{Sseq}_{\leq k}(\spv)$ are computed pointwise, we have the following:
\begin{lem}\label{restriction filtered}
The restriction functor $r(k)^*\colon\msf{Sseq}(\spv)\rta\msf{Sseq}_{\leq k}(\spv)$ commutes with  limits and colimits. 
\end{lem}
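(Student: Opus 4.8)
The plan is to reduce the statement to the general principle, already flagged in the sentence preceding it, that limits and colimits in a functor $\infty$-category valued in a complete and cocomplete target are computed pointwise. First I would record that $\msf{Sp}$ is both complete and cocomplete (being stable and presentable), so by \cite[Cor. 5.1.2.3]{HTT} both $\msf{Sseq}(\msf{Sp})=\msf{Fun}(\msf{Fin}^\mrm{bij},\msf{Sp})$ and $\msf{Sseq}_{\leq k}(\msf{Sp})=\msf{Fun}(\msf{Fin}^\mrm{bij}_{\leq k},\msf{Sp})$ admit all small limits and colimits, and for each finite set $B$ the evaluation functor $\mrm{ev}_B$ preserves them. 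Next I would identify $r(k)^*$ with the precomposition (restriction) functor $j^*$ along the fully faithful inclusion $j\colon\msf{Fin}^\mrm{bij}_{\leq k}\hookrta\msf{Fin}^\mrm{bij}$, and note the tautological compatibility $\mrm{ev}_{B'}\circ j^*\simeq\mrm{ev}_{j(B')}$ for every finite set $B'$ with $|B'|\leq k$.

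The core of the argument is then a one-line computation. Given a small diagram $p\colon I\rta\msf{Sseq}(\msf{Sp})$ with colimit $L$, for every $B'$ with $|B'|\leq k$ we have
\[
\mrm{ev}_{B'}(j^*L)\simeq \mrm{ev}_{j(B')}(L)\simeq \mrm{colim}_{i\in I}\,\mrm{ev}_{j(B')}(p(i))\simeq \mrm{colim}_{i\in I}\,\mrm{ev}_{B'}(j^*p(i)),
\]
using pointwise colimits in $\msf{Sseq}(\msf{Sp})$ for the middle equivalence and the compatibility above on both ends. Since evaluation at the objects of $\msf{Fin}^\mrm{bij}_{\leq k}$ is jointly conservative and detects colimits in $\msf{Sseq}_{\leq k}(\msf{Sp})$, this exhibits $j^*L$ as the colimit of $j^*\circ p$, so $r(k)^*$ preserves colimits. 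The identical argument with limits in place of colimits gives preservation of limits.

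Alternatively — and perhaps more cleanly — one can observe that because $\msf{Sp}$ admits all small limits and colimits and $\msf{Fin}^\mrm{bij}_{\leq k}$ is a small $\infty$-category, the restriction functor $j^*$ admits both a left adjoint $j_!$ (left Kan extension) and a right adjoint $j_*$ (right Kan extension); a functor with a left adjoint preserves limits and a functor with a right adjoint preserves colimits, so $j^*=r(k)^*$ preserves both. I would include this as a remark. There is no real obstacle here; the only point requiring a little care is to invoke the $\infty$-categorical form of ``pointwise (co)limits,'' which is exactly what the cited result from \cite{HTT} provides.
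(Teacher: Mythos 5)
Your main argument is exactly the paper's implicit proof: the lemma is asserted immediately after the observation that limits and colimits in $\msf{Sseq}_{\leq k}(\msf{Sp})$ are computed pointwise, which is precisely the computation you spell out. Your alternative via left and right Kan extensions is also sound as an independent argument (the paper instead deduces the existence of those adjoints as a consequence of the lemma, so the logical order is reversed, but there is no circularity since the Kan extensions exist simply because $\msf{Sp}$ is (co)complete and $\msf{Fin}^\mrm{bij}_{\leq k}$ is small).
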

\noindent Thus $r(k)^*$ admits both a left and right adjoint given by left and right Kan extensions.

We would like to be able to define $k$-truncated operads in $\msf{Sp}$ as monoid objects in $\msf{Sseq}_{\leq k}(\msf{Sp})$. To do so, we need a monoidal structure on $\msf{Sseq}_{\leq k}(\msf{Sp})$.
\begin{lem}\label{monoidal on restricted}
There is a unique monoidal structure on $\msf{Sseq}_{\leq k}(\spv)$ so that the restriction functor $r(k)^*$ is  monoidal. The monoidal structure is given by viewing $S,T\in\msf{Sseq}_{\leq k}(\spv)$ as objects of $\msf{Sseq}(\spv)$ with $S(p)=T(p)=*$ for $p>k$, taking the composition product $S\circ T$ in $\msf{Sseq}(\spv)$, and applying $r(k)^*$. 
\end{lem} 
\begin{proof}
By the above discussion, 
$\msf{Sseq}_{\leq k}(\spv)$ is a localization of $\msf{Sseq}(\spv)$. By \cite[Prop. 2.2.1.9]{HA}, the $\infty$-category $\msf{Sseq}_{\leq k}(\spv)$ will inherit a monoidal structure such that the restriction $r(k)^*$ is monoidal if the following condition holds: Let $X\rta Y$ be a map of symmetric sequences that induces an equivalence in arity $i$ for every $i\leq k$. Then for any symmetric sequence $Z$, the induced maps $X\circ Z\rta Y\circ Z$ and $Z\circ X\rta Z\circ Y$ also induce equivalences in every arity below $k$.  This condition holds in the case at hand since below arity $k$, the contribution of $X$ to the symmetric sequences $X\circ Z$ and $Z\circ X$ only involves term $X(i)$ for $i\leq k$, and similarly for $Y$. 

Uniqueness comes from the fact that the functor $r(k)^*$ is surjective on objects. 
Indeed, given $S\in\msf{Sseq}_{\leq k}(\spv)$, the symmetric sequence $\tilde{S}$ with $\tilde{S}(p)=S(p)$ for $p\leq k$ and $\tilde{S}(p)=*$ for $p>k$ hits $S$. Since $r(k)^*$ is monoidal, the monoidal structure on $\msf{Sseq}_{\leq k}(\spv)$ must be given as described in the Lemma statement.
\end{proof}
\begin{lem}\label{monoidal k}
The monoidal structure on $\msf{Sseq}_{\leq k}(\msf{Sp})$ commutes with sifted colimits in each variable.
\end{lem}
\begin{proof}
Since $\msf{Sseq}_{\leq k}(\msf{Sp})$ is a localization of $\msf{Sseq}(\msf{Sp})$, it suffices to show the result for $\msf{Sseq}(\msf{Sp})$. 
The monoidal structure on symmetric sequences is defined using composition in 
\[\msf{Fun}_{\msf{CAlg}(\mrm{Pr}^\mrm{L})}(\msf{Sseq}(\msf{Sp}),\msf{Sseq}(\msf{Sp})),\]
where $\msf{Sseq}(\msf{Sp})$ is viewed with its \emph{symmetric} monoidal structure by Day Convolution. 
Sifted colimits of symmetric monoidal functors are computed on their underlying functors, 
since $\msf{Sp}$ is $\otimes$-presentable. 
The result follows. 
\end{proof}
\begin{convention}
In analogue with Convention \ref{oprd} We denote the $\infty$-category of nonunital, reduced monoid objects in $\msf{Sseq}_{\leq k}(\msf{Sp})$ by $\msf{Oprd}_{\leq k}$ and the $\infty$-category of nonunital, reduced comonoid objects by $\msf{CoOprd}_{\leq k}$.
\end{convention}
By Lemma \ref{monoidal on restricted}, the $\infty$-category $\msf{Sseq}_{\leq k}(\msf{Sp})$ has a monoidal structure.
Moreover, $\msf{Sseq}_{\leq k}(\msf{Sp})$ has limits and colimits. 
By Theorem \ref{bar}, we have a bar-cobar adjunction. 
Note that Theorem \ref{bar} does not require any compatibility between the monoidal structure and certain limits or colimits. 

Let ${\msf{Bar}_{\leq k}=\msf{Bar}_{\msf{Sseq}_{\leq k}(\msf{Sp})}}$ and ${\msf{Cobar}_{\leq k}=\msf{Cobar}_{\msf{Sseq}_{\leq k}(\msf{Sp})}}$. We will need the analogue of Theorem \ref{bar cobar is equiv} for $\msf{Sseq}_{\leq k}(\msf{Sp})$. 
\begin{thm}\label{bar cobar is equiv k}
The bar and cobar constructions
\[\msf{Bar}_{\leq k}\colon\msf{Oprd}^\mrm{aug}_{\leq k}\rightleftarrows \msf{CoOprd}^\mrm{aug}_{\leq k}\colon\msf{Cobar}_{\leq k}\]
 define an equivalence of categories.
\end{thm}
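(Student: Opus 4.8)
The plan is to deduce this from Ching's Theorem \ref{bar cobar is equiv} by transporting that statement along the restriction functor $r(k)^*\colon\msf{Sseq}(\msf{Sp})\rta\msf{Sseq}_{\leq k}(\msf{Sp})$. I will use three inputs about $r(k)^*$: it is monoidal (Lemma \ref{monoidal on restricted}); it preserves all limits and colimits (Lemma \ref{restriction filtered}), in particular geometric realizations and totalizations; and it admits both a fully faithful left adjoint and a fully faithful right adjoint, which agree, since in the stable category $\msf{Sp}$ the zero object is both initial and terminal — both adjoints send a $k$-truncated symmetric sequence to the one obtained by setting all arities above $k$ to the zero spectrum. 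Write $r(k)_*$ for this common adjoint; it is fully faithful and $r(k)^*r(k)_*\simeq\mrm{id}$.

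First I would record that $r(k)^*$ intertwines the two bar--cobar adjunctions, i.e. that there are natural equivalences $r(k)^*\circ\msf{Bar}\simeq\msf{Bar}_{\leq k}\circ r(k)^*$ and $r(k)^*\circ\msf{Cobar}\simeq\msf{Cobar}_{\leq k}\circ r(k)^*$. Since $\msf{Sseq}_{\leq k}(\msf{Sp})$ is a functor category into $\msf{Sp}$ it has sifted colimits, so by the discussion following Theorem \ref{bar} the constructions $\msf{Bar}$, $\msf{Bar}_{\leq k}$ are geometric realizations of the two-sided bar construction and $\msf{Cobar}$, $\msf{Cobar}_{\leq k}$ totalizations of the two-sided cobar construction. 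Applying $r(k)^*$ levelwise, using that it is monoidal (so it preserves the relevant composition products and sends the unit to the unit) and preserves geometric realizations and totalizations, produces the displayed equivalences on underlying symmetric sequences; naturality of the comonoid, resp. monoid, structure on the two-sided constructions upgrades them to equivalences of functors valued in comonoids, resp. monoids.

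Next I would show $r(k)^*$ is essentially surjective on $\msf{Mon}^\mrm{nu,red,aug}$ and on $\msf{Comon}^\mrm{nu,red,aug}$. As the right adjoint of the strong monoidal $r(k)^*$, the functor $r(k)_*$ is lax monoidal, so carries monoids to monoids; as the left adjoint it is oplax monoidal, so carries comonoids to comonoids; and in either case $r(k)^*r(k)_*\simeq\mrm{id}$ compatibly with the lax, resp. oplax, monoidal structures, so $r(k)_*$ is a section of $r(k)^*$ on monoids and on comonoids. For $k\geq1$ the adjectives nonunital, reduced, (co)augmented involve only arities $0$ and $1$, hence are preserved; $k=0$ is degenerate. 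Now combine: given a $k$-truncated operad $\cal{O}_{\leq k}\simeq r(k)^*\cal{O}$, the first two steps and Theorem \ref{bar cobar is equiv} give
\[\msf{Cobar}_{\leq k}\msf{Bar}_{\leq k}(\cal{O}_{\leq k})\simeq r(k)^*\msf{Cobar}\,\msf{Bar}(\cal{O})\simeq r(k)^*\cal{O}\simeq\cal{O}_{\leq k},\]
and dually $\msf{Bar}_{\leq k}\msf{Cobar}_{\leq k}(\cal{P}_{\leq k})\simeq\cal{P}_{\leq k}$ for a $k$-truncated cooperad $\cal{P}_{\leq k}$; hence $\msf{Bar}_{\leq k}$ and $\msf{Cobar}_{\leq k}$ are mutually quasi-inverse and the adjunction is an adjoint equivalence.

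I expect the main obstacle to be the first step: verifying that $r(k)^*$ commutes with $\msf{Bar}$ and $\msf{Cobar}$ as functors landing in comonoids and monoids — not merely on underlying symmetric sequences — which requires keeping track of the (co)monoid coherences through the two-sided (co)bar formulas. Everything after that is formal, given that $r(k)_*$ is simultaneously lax and oplax monoidal. One could instead phrase the first step as saying that a monoidal functor preserving the pertinent colimits and limits yields a morphism of the bar--cobar adjunctions, but this only relocates the coherence bookkeeping.
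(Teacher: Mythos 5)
Your argument is correct and is essentially the same as the paper's: transport Ching's equivalence along the monoidal, (co)limit-preserving restriction $r(k)^*$, using that $\msf{Bar}$ and $\msf{Cobar}$ commute with $r(k)^*$ and that $r(k)^*r(k)_*\simeq\mrm{id}$. You are somewhat more careful than the paper in two places — explicitly noting that $r(k)_!\simeq r(k)_*$ (extension by zero, since $\msf{Fin}^\mrm{bij}$ is a groupoid and $\msf{Sp}$ is pointed) so that this section preserves both monoids and comonoids, and checking the counit $\msf{Bar}_{\leq k}\msf{Cobar}_{\leq k}\simeq\mrm{id}$ in addition to the unit — but these are details the paper elides rather than a different method.
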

\noindent In particular, for any such monoid $\mathcal{Q}$, there is an equivalence $\msf{Cobar}_{\leq k}\msf{Bar}_{\leq k}\mathcal{Q}\simeq\mathcal{Q}$.
\begin{proof}
By Lemma \ref{restriction filtered}, the restriction functor $r(k)^*$ commutes with geometric realizations and totalizations of cosimplicial objects. 
By Lemma \ref{monoidal on restricted}, the functor $r(k)^*$ is monoidal.
Thus $r(k)^*$ commutes with the bar and cobar constructions. 
Explicitly, by \cite[Ex. 5.2.3.11]{HA}, we have an equivalence of cooperads, ${\msf{Bar}_{\leq k}(r(k)^*\mathcal{O})\simeq r(k)^*\msf{Bar}(\mathcal{O})}$
and an equivalence of operads,
${\msf{Cobar}_{\leq k}(r(k)^*\mathcal{P})\simeq r(k)^*\msf{Cobar}(\mathcal{P})}$. 
By Theorem \ref{bar cobar is equiv}, $\msf{Cobar}\msf{Bar}\simeq\mrm{Id}$. 
Thus we have equivalences,
\begin{align*}
\msf{Cobar}_{\leq k}\msf{Bar}_{\leq k}Q&\simeq \msf{Cobar}_{\leq k}\msf{Bar}_{\leq k}(r(k)^*r(k)_*Q)\\
&\simeq r(k)^*\msf{Cobar}\msf{Bar}(r(k)_*Q)\\
&\simeq r(k)^*r(k)_*Q.
\end{align*}
Since ${r(k)^*r(k)_*\simeq\mrm{Id}}$, we have $r(k)^*r(k)_*Q\simeq Q$.
\end{proof}
By Lemma \ref{monoidal on restricted},
the restriction functor $r(k)^*$ induces functors on monoid and comonoid objects,
\begin{align}\label{b}
&r(k)^*\colon\msf{Oprd}\rta\msf{Oprd}_{\leq k}\\\label{a}
&r(k)^*\colon\msf{CoOprd}\rta\msf{CoOprd}_{\leq k}.
\end{align}
By Lemma \ref{restriction filtered}, the underlying restriction functor admits a right and left adjoint. Applying \cite[Cor. 7.3.2.7]{HA},
these lift to the level of monoid and comonoid objects, respectively. 
Let $r(k)_*$ denote the right adjoint to (\ref{b}) and $r(k)^\flat$ the left adjoint to (\ref{a}). The following is \cite[Thm. 4.5]{Heuts}.
\begin{thm}[Heuts]
Restriction $r(k)^*\colon\msf{Oprd}\rta \msf{Oprd}_{\leq k}$ admits a left adjoint $r_!(k)$.
\end{thm}
To show that the restriction functor, $r(k)^*$, on comonoid objects admits a right adjoint,
we need $\msf{CoOprd}_{\leq k}$ to be presentable.
This follows from a more general result. 
The proof of which uses similar techniques to Proposition \ref{left comod is accessible}, 
and should be compared to the proof of \cite[Prop. 3.1.3]{Ell1} applied to associative, rather than commutative, algebras. 
Again, the reader should note that what we refer to as (co)monoid objects are called (co)algebra objects by Lurie.  
\begin{prop}\label{presentable coalg}
Suppose $\mathcal{D}$ is a monoidal category which is presentable, 
and that the monoidal structure commutes with filtered colimits.
Then, the category of comonoid objects, $\msf{CoMon}(\mathcal{D})$, is presentable.
\end{prop}
\begin{proof}
By \cite[Cor. 3.2.2.5]{HA}, $\msf{CoMon}(\mathcal{D})$ has all colimits.
It therefore suffices to show that it is accessible. 
By \cite[Prop. 4.1.2.10]{HA}, the monoidal structure on $\mathcal{D}$
is determined by a functor $\Delta^\mrm{op}\rta \msf{Cat}_\infty$,
given informally by $[n]\mapsto \mathcal{D}^{\times n}$.
This functor classifies a cartesian fibration
$q\colon\mathcal{D}_{\otimes}\rta\Delta$.
The category $\msf{CoMon}(\mathcal{D})$ may be identified with the category
of sections of $q$ which send inert morphisms in $\Delta$
to $q$-cartesian morphisms in $\mathcal{D}_{\otimes}$. 
Since $\mathcal{D}$ is accessible, 
\cite[Prop. 5.4.7.11]{HTT} implies that $\msf{CoMon}(\mathcal{D})$ is accessible.
\end{proof}
\begin{cor}
The category $\msf{CoOprd}_{\leq k}$ is presentable.
\end{cor}
\begin{proof}
By Lemma \ref{monoidal k}, we can apply Proposition \ref{presentable coalg} to $\msf{Sseq}_{\leq k}(\msf{Sp})$. 
\end{proof}
\begin{cor}
The restriction functor $r(k)^*$ on comonoid objects admits a right adjoint $r(k)_\sharp$.
\end{cor}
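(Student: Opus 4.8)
The plan is to produce $r(k)_\sharp$ by transporting the right adjoint of the restriction functor on \emph{monoid} objects across the bar/cobar equivalences. Recall from the text above that the restriction functor on monoid objects,
\[r(k)^*\colon\msf{Mon}^\mrm{aug,nu,red}(\msf{Sseq}(\msf{Sp}))\rta\msf{Mon}^\mrm{aug,nu,red}(\msf{Sseq}_{\leq k}(\msf{Sp})),\]
already admits a right adjoint $r(k)_*$; this is part of the elementary package coming from Lemma~\ref{restriction filtered}, and concretely it holds because $r(k)^*$ is monoidal (Lemma~\ref{monoidal on restricted}), so its right adjoint on underlying symmetric sequences is lax monoidal and hence lifts to monoid objects. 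What must be produced is the dual statement for comonoid objects, which is not elementary in the same way, since the \emph{left} adjoint $r(k)_!$ on symmetric sequences is only oplax monoidal and so does not obviously lift to comonoids.

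The key input is the compatibility of $r(k)^*$ with the cobar construction recorded in the proof of Theorem~\ref{bar cobar is equiv k}: there is a natural equivalence
\[\msf{Cobar}_{\leq k}\circ r(k)^*\simeq r(k)^*\circ\msf{Cobar}\]
of functors $\msf{CoMon}^\mrm{aug,nu,red}(\msf{Sseq}(\msf{Sp}))\rta\msf{Mon}^\mrm{aug,nu,red}(\msf{Sseq}_{\leq k}(\msf{Sp}))$. First I would invoke Theorems~\ref{bar cobar is equiv} and~\ref{bar cobar is equiv k}, so that $(\msf{Bar},\msf{Cobar})$ and $(\msf{Bar}_{\leq k},\msf{Cobar}_{\leq k})$ are mutually inverse equivalences between monoid and comonoid objects. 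Then, for $D\in\msf{CoMon}^\mrm{aug,nu,red}(\msf{Sseq}(\msf{Sp}))$ and $E\in\msf{CoMon}^\mrm{aug,nu,red}(\msf{Sseq}_{\leq k}(\msf{Sp}))$, I would compute
\begin{align*}
\msf{Map}(r(k)^*D,E)&\simeq\msf{Map}\big(\msf{Cobar}_{\leq k}r(k)^*D,\ \msf{Cobar}_{\leq k}E\big)\\
&\simeq\msf{Map}\big(r(k)^*\msf{Cobar}\,D,\ \msf{Cobar}_{\leq k}E\big)\\
&\simeq\msf{Map}\big(\msf{Cobar}\,D,\ r(k)_*\msf{Cobar}_{\leq k}E\big)\\
&\simeq\msf{Map}\big(D,\ \msf{Bar}\,r(k)_*\msf{Cobar}_{\leq k}E\big),
\end{align*}
where the first step uses that $\msf{Cobar}_{\leq k}$ is an equivalence, the second uses commutation of $r(k)^*$ with $\msf{Cobar}$, the third is the adjunction $r(k)^*\dashv r(k)_*$ on monoid objects, and the last uses that $\msf{Bar}$ is an equivalence with inverse $\msf{Cobar}$. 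This exhibits
\[r(k)_\sharp\colonequals\msf{Bar}\circ r(k)_*\circ\msf{Cobar}_{\leq k}\]
as a right adjoint to $r(k)^*$ on comonoid objects.

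The main thing to be careful about — and the step I expect to carry the content — is that the equivalence $\msf{Cobar}_{\leq k}\circ r(k)^*\simeq r(k)^*\circ\msf{Cobar}$ must hold naturally at the level of comonoid objects, not merely on underlying symmetric sequences; but this is exactly what the monoidality of $r(k)^*$ and its compatibility with totalizations of cosimplicial objects supply, and it is already established in the course of proving Theorem~\ref{bar cobar is equiv k}. One could instead try to dualize Heuts' construction of $r_!$ directly by passing to the opposite category, but the composition product interacts poorly with $(-)^{\mrm{op}}$ (it involves an infinite coproduct), so the route through Theorem~\ref{bar cobar is equiv k} is the cleaner one.
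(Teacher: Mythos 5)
Your proof is correct but takes a genuinely different route from the paper's. The paper identifies $\msf{CoMon}^\mrm{aug,nu,red}(\msf{Sseq}(\msf{Sp}))$ with monoid objects in the opposite symmetric-sequence $\infty$-category and sets $r(k)_\sharp=(r_!)^\mrm{op}$, dualizing Heuts' left adjoint $r_!$. You instead transport the elementary right adjoint $r(k)_*$ on monoid objects (a right Kan extension, available by Lemma~\ref{restriction filtered}) across the bar/cobar equivalences, taking $r(k)_\sharp\colonequals\msf{Bar}\circ r(k)_*\circ\msf{Cobar}_{\leq k}$; the only nonformal input beyond the equivalences themselves is the commutation $\msf{Cobar}_{\leq k}\circ r(k)^*\simeq r(k)^*\circ\msf{Cobar}$, which is established in the course of proving Theorem~\ref{bar cobar is equiv k}. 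The paper's route is shorter, but — as you observe — it leans on Heuts' nontrivial construction of $r_!$ together with a passage to the opposite monoidal structure on symmetric sequences, which is not a formal move since the composition product involves an infinite coproduct. Your argument trades that for a dependence on Ching's bar--cobar equivalence (Theorem~\ref{bar cobar is equiv}) and its truncated analogue, both of which are already in play in this section, so nothing extra is purchased. The two constructions necessarily produce the same functor by uniqueness of adjoints; your version has the virtue of making the dependence on the bar--cobar machinery explicit, which fits the surrounding development, at the cost of a slightly longer chain of identifications.
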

\begin{defn}\label{operad filt}
Let $\mathcal{O}\in\msf{Oprd}$. Define a filtration of $\mathcal{O}$ in $\msf{Oprd}$ 
\[\mathcal{O}\rta\cdots\rta\mathcal{O}_{\leq k}\rta\mathcal{O}_{\leq k-1}\rta\cdots\]
by $\mathcal{O}_{\leq k}=r(k)_*r(k)^*\mathcal{O}$.
\end{defn}
The above filtration of operads is considered in \cite[Def. 4.1]{Pereira}, \cite[Eq. 3.5]{HarperHess}, and in \cite{Heuts}, where $\mathcal{O}_{\leq k}$ is referred to as the ``$k$-truncation."  

We can define a similar filtration on right modules over an operad $\mathcal{O}$. 
By Lemma \ref{monoidal on restricted}, 
the restriction $r(k)$ induces a functor $(-)_{\leq k}$ on right $\mathcal{O}$-modules in $\spv$. 
The underlying symmetric sequence of the right module $M_{\leq k}$ obtained from a right module $M$ agrees with $M$ up to arity $k$ and is zero above. 
The module maps agree with those of $M$ or are projections. 
This filtration $M_{\leq \bullet}$ of right $\mathcal{O}$-modules is considered in the proof of \cite[Prop. 6.1]{ChingBar}. 
We will use this filtration in \S\ref{sec-main} below. 
\begin{defn}\label{cooperad filt}
Let $\mathcal{P}\in\msf{CoOprd}$. Define a filtration of $\mathcal{P}$ in $\msf{CoOprd}$
\[\mathcal{P}\rta\cdots\rta\mathcal{P}^{\leq k}\rta\mathcal{P}^{\leq k-1}\rta\cdots\]
by $\mathcal{P}^{\leq k}=r(k)_\sharp r(k)^*\mathcal{P}$.
\end{defn}
\begin{ex}\label{k=1}
For any $\mathcal{O}\in\msf{Oprd}$, we have an equivalence of operads $\mathcal{O}_{\leq 1}\simeq\mathcal{O}_\msf{triv}$ which defines a canonical augmentation of $\mathcal{O}$. For any $\mathcal{P}\in\msf{CoOprd}$, we have an equivalence of cooperads $\mathcal{P}^{\leq 1}\simeq\mathcal{O}_\msf{triv}$ under which the map $\mathcal{P}\rta\mathcal{P}^{\leq 1}$ is identified with the counit $\eta\colon\mathcal{P}\rta\mathcal{O}_{\msf{triv}}$.
\end{ex}
\begin{convention}
By Example \ref{k=1}, nonunital, reduced operads are canonically augmented. 
We regard objects $\mathcal{O}\in\msf{Oprd}$ as augmented via this augmentation. 
\end{convention}
\begin{thm}\label{level 1}
Let $\mathcal{O}\in\msf{Oprd}$. There is an equivalence of filtrations in $\msf{CoOprd}$ 
\[\msf{Bar}(\mathcal{O}_{\leq \bullet})\simeq\left(\msf{Bar}(\mathcal{O})\right)^{\leq \bullet}.\]
\end{thm}
\begin{proof}
As argued in the proof of Theorem \ref{bar cobar is equiv k}, we have an equivalence of cooperads  
\[\msf{Bar}_{\leq k}(r(k)^*\mathcal{O})\simeq r(k)^*\msf{Bar}(\mathcal{O}).\]
We show that for $\mathcal{Q}\in\msf{Oprd}_{\leq k}$, there is an equivalence of operads  
\[{\msf{Cobar}(r(k)_\sharp\mathcal{Q})\simeq r(k)_*\msf{Cobar}_{\leq k}(\mathcal{Q})}\]
by showing that the two operads in $\msf{Sp}$ corepresent the same functor. Let $\mathcal{R}\in\msf{Oprd}$ be a test object. We have a string of equivalences,
\begin{align*}
\msf{Hom}_{\msf{Oprd}} (\mathcal{R},\msf{Cobar}(r(k)_\sharp\mathcal{Q}))&\simeq \msf{Hom}_\msf{CoOprd}(\msf{Bar}\mathcal{R},r(k)_\sharp\mathcal{Q})\\
&\simeq \msf{Hom}_{\msf{CoOprd}_{\leq k}}(r(k)^*\msf{Bar}\mathcal{R},\mathcal{Q})\\
&\simeq \msf{Hom}_{\msf{CoOprd}_{\leq k}}(\msf{Bar}_{\leq k}(r(k)^*\mathcal{R}),\mathcal{Q})\\
&\simeq \msf{Hom}_{\msf{Oprd}_{\leq k}}(r(k)^*\mathcal{R},\msf{Cobar}_{\leq k}\mathcal{Q})\\
&\simeq \msf{Hom}_{\msf{Oprd}}(\mathcal{R},r(k)_*\msf{Cobar}_{\leq k}\mathcal{Q})
\end{align*}
so that $\msf{Cobar}(r(k)_\sharp\mathcal{Q})\simeq r(k)_*\msf{Cobar}_{\leq k}(\mathcal{Q})$, as desired. 

Finally, we show that $\msf{Bar}(\mathcal{O}_{\leq k})\simeq (\msf{Bar}(\mathcal{O}))^{\leq k}$. By Theorem \ref{bar cobar is equiv}, it suffices to show that there is an equivalence after applying $\msf{Cobar}$; in other words, that there is an equivalence of augmented operads $\msf{Cobar}\msf{Bar}(\mathcal{O}_{\leq k})\simeq \msf{Cobar}(\msf{Bar}(\mathcal{O}))^{\leq k}$. 
Note that by Theorem \ref{bar cobar is equiv}, there is an equivalence $\msf{Cobar}\msf{Bar}(\mathcal{O}_{\leq k})\simeq\mathcal{O}_{\leq k}$ and by Theorem \ref{bar cobar is equiv k}, there is an equivalence $\msf{Cobar}_{\leq k}\msf{Bar}_{\leq k}(\mathcal{Q})\simeq\mathcal{Q}$ for any nonunital, reduced augmented monoid in $\msf{Sseq}_{\leq k}(\msf{Sp})$. We have a string of equivalences,
\begin{align*}
\msf{Cobar}(\msf{Bar}(\mathcal{O}))^{\leq k}&\simeq \msf{Cobar}\left(r(k)_\sharp r(k)^*(\msf{Bar}(\mathcal{O}))\right)\\
&\simeq r(k)_*\msf{Cobar}_{\leq k}\msf{Bar}_{\leq k}(r(k)^*\mathcal{O})\\
&\simeq r(k)_*r(k)^*\mathcal{O}\\
&\simeq \mathcal{O}_{\leq k}.
\end{align*}
Thus there is an equivalence $\mathcal{O}_{\leq k}\simeq \msf{Cobar}\msf{Bar}(\mathcal{O}_{\leq k})\simeq \msf{Cobar}(\msf{Bar}(\mathcal{O}))^{\leq k}$ 
and hence, for every $k$, an equivalence
$\msf{Bar}(\mathcal{O}_{\leq k})\simeq (\msf{Bar}(\mathcal{O}))^{\leq k}$.
\end{proof}
\section{Induced Filtration on Algebras}\label{induced}
In this section, we define a filtration on algebras over an operad induced from the filtration in Definition \ref{operad filt}. The goal of this section is to prove that the bar construction sends this induced filtration to a filtration on coalgebras induced from the filtration in Definition \ref{cooperad filt}.
\begin{convention}
Throughout this section, $\mathcal{O}$ will be an object in $\msf{Oprd}$
and $A$ will be a $\mathcal{O}$-algebra in $\mathcal{V}$. 
\end{convention}
Recall from Definition \ref{operad filt} that we have a filtration $\mathcal{O}_{\leq \bullet}$ in $\msf{Oprd}$. 
For each $k$, let $r_k\colon\mathcal{O}\rta\mathcal{O}_{\leq k}$ denote the map in the filtration. Then $r_k$ induces an adjunction
\[(r_k)_!\colon\msf{Alg}_{\mathcal{O}}(\mathcal{V})\rightleftarrows\msf{Alg}_{\mathcal{O}_{\leq k}}(\mathcal{V}):r_k^*.\]
\begin{defn}\label{our filtration}
Define a filtration of the $\mathcal{O}$-algebra $A$ by $\mathcal{O}$-algebras
\[A\rta\cdots\rta\rho_kA\rta\rho_{k-1}A\rta\cdots\]
with $\rho_kA\colonequals r_k^*(r_k)_!A$.
\end{defn}
\begin{rmk}\label{goodwillie}
For $\mathcal{V}=\msf{Sp}$, the filtration $\rho_\bullet A$ is considered in \cite{Pereira}, \cite{KP}, \cite{Cohn}, and \cite{HarperHess}. 
In \cite[Thm. 4.3]{Pereira} and \cite[Rmk. 1.14]{HarperHess} the filtration $\rho_\bullet A$ is identified with the Goodwillie filtration of the identity on $\mathcal{O}$-algebras, 
$\rho_\bullet A\simeq(P_\bullet\mrm{Id})A$. Moreover, by \cite[Thm. 1.12]{HarperHess}, if $A$ is 0-connected, then the Goodwillie filtration $\rho_\bullet A$ converges.
\end{rmk}
Let $\mathcal{P}\in\msf{CoOprd}$. 
Recall from Definition \ref{cooperad filt} that we have a filtration $\mathcal{P}^{\leq \bullet}$ in $\msf{CoOprd}$. For each $k$, let $s_k\colon\mathcal{P}\rta\mathcal{P}^{\leq k}$ denote the map in the filtration. 
Using the equivalence of Theorem \ref{level 1},
the morphism $s_k$ agrees with the morphism $\msf{Bar}(r_k)$.
Then $s_k$ induces an adjunction
\[s_k^\flat\colon\msf{CoAlg}^\mrm{dp,nil}_{\mathcal{P}}(\mathcal{V})\rightleftarrows\msf{CoAlg}^\mrm{dp, nil}_{\mathcal{P}^{\leq k}}(\mathcal{V}):(s_k)_\sharp.\] 
\begin{defn}
Let $C$ be a $\mathcal{P}$-coalgebra in $\mathcal{V}$. Define a filtration of $C$ by $\mathcal{P}$-coalgebras
\[C\rta\cdots\rta\tau^{\leq k}C\rta\tau^{\leq k-1}C\rta\cdots\]
with $\tau^{\leq k}C\colonequals (s_k)_\sharp s_k^\flat C$.
\end{defn}
\begin{rmk}\label{tau converge}
By \cite[Lem. C30]{Heuts}, the filtration $\tau^{\leq\bullet}(-)$ converges for any $\mathcal{P}$-coalgebra.
\end{rmk}
\begin{ex}\label{k=1 filtrations}
Take $k=1$. Then $\rho_1A$ is equivalent to the trivial $\mathcal{O}$-algebra on the cotangent complex, $\rho_1A\simeq\ep^*\ep_!A$. 
Indeed, by Example \ref{k=1}, we have an equivalence $\mathcal{O}_{\leq 1}\simeq\mathcal{O}_{\msf{triv}}$ under which $r_1$ corresponds to the augmentation map $\ep$. According to the definitions in Example \ref{triv and free}, $\ep_!A\simeq  LA$ and $\ep^*(LA)$ is the trivial $\mathcal{O}$-algebra. 

Similarly, $\mathcal{P}^{\leq 1}$ is equivalent to $\mathcal{O}_{\msf{triv}}$, which identifies  $s_1$ with the counit $\eta$. Thus ${\tau^{\leq 1}C\simeq \eta_\sharp\eta^\flat C}$ is the cofree $\mathcal{P}$-coalgebra on the underlying object of $C$.
\end{ex}

We would like to identify the image of the filtration $\rho_\bullet A$ under the bar construction. To do so, we will need to know when the functors $\msf{Bar}_\mathcal{O}$ and $\msf{Cobar}_{\msf{Bar(\mathcal{O})}}$ are equivalences. In \cite[Conj. 3.4.5]{FG}, it is conjectured that there are equivalences
\begin{align}\label{fg conj}
\msf{Bar}_{\mathcal{O}}\colon\msf{Alg}_\mathcal{O}^\mrm{ nil}(\mathcal{V})\rightleftarrows\msf{CoAlg}_{\msf{Bar}(\mathcal{O})}^\mrm{ dp,nil}(\mathcal{V}):\msf{Cobar}_{\msf{Bar}(\mathcal{O})}.
\end{align}
For $\mathcal{V}=\msf{Sp}$, the conjecture was proven in the 0-connective case in \cite[Thm. 1.2]{ChingHarper}. Following \cite[Rmk. 1.4]{HarperHess}, we call a spectral operad $\mathcal{O}$ $(-1)$-connected if, for each arity $n$, the spectrum $\mathcal{O}(n)$ is $(-1)$-connected. An $\mathcal{O}$-algebra $A$ in $\msf{Sp}$ is $0$-connected if the underlying spectrum of $A$ is $0$-connected. 
\begin{thm}[Ching-Harper]\label{ch thm}
Let $\mathcal{O}$ be a nonunital, reduced, augmented, $(-1)$-connected operad in $\msf{Sp}$. Then the bar construction on $\msf{Alg}_\mathcal{O}(\msf{Sp})$ restricts to an equivalence on $0$-connected objects.
\end{thm}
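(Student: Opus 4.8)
The plan is to deduce the statement from the convergence of the towers $\rho_\bullet$ and $\tau^{\leq\bullet}$ recorded in Remark \ref{goodwillie} and Remark \ref{tau converge}, together with the truncated equivalences of Proposition \ref{algebra bar cobar is equiv}. By Theorem \ref{bar cobar is equiv} the bar--cobar adjunction is already an equivalence at the level of operads, so on the level of algebras and coalgebras it remains to check three things: (i) the unit $A\to\msf{Cobar}_{\msf{Bar}(\cal{O})}\msf{Bar}_\cal{O}A$ is an equivalence for every $0$-connected $\cal{O}$-algebra $A$; (ii) the counit $\msf{Bar}_\cal{O}\msf{Cobar}_{\msf{Bar}(\cal{O})}C\to C$ is an equivalence for every $\msf{Bar}(\cal{O})$-coalgebra $C$ with $0$-connected underlying spectrum; and (iii) $\msf{Bar}_\cal{O}$ and $\msf{Cobar}_{\msf{Bar}(\cal{O})}$ preserve $0$-connectivity of underlying spectra, so that the adjunction really does restrict to these subcategories. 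For $\msf{Bar}_\cal{O}$ the last point is the estimate that $\ep_!A\simeq\iota^\flat\msf{Bar}_\cal{O}A$, the underlying spectrum of $\msf{Bar}_\cal{O}A$, is $0$-connected when $A$ is; this is where the hypothesis that $\cal{O}$ is $(-1)$-connected is first used.

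For (i) I would identify $\msf{Cobar}_{\msf{Bar}(\cal{O})}\msf{Bar}_\cal{O}$ with the limit of the Goodwillie tower of the identity. Applying $s_k^\flat$ to $\msf{Bar}_\cal{O}A$ and invoking Lemma \ref{bar and left} gives $s_k^\flat\msf{Bar}_\cal{O}A\simeq\msf{Bar}_{\cal{O}_{\leq k}}(r_k)_!A$, so that $\tau^{\leq k}\msf{Bar}_\cal{O}A\simeq(s_k)_\sharp\msf{Bar}_{\cal{O}_{\leq k}}(r_k)_!A$. Since $\msf{Cobar}_{\msf{Bar}(\cal{O})}$ is a right adjoint it commutes with the limit $\msf{Bar}_\cal{O}A\simeq\lim_k\tau^{\leq k}\msf{Bar}_\cal{O}A$ of Remark \ref{tau converge}, and applying Corollary \ref{cobar and right} together with Theorem \ref{level 1} and Proposition \ref{algebra bar cobar is equiv} term by term yields $\msf{Cobar}_{\msf{Bar}(\cal{O})}\tau^{\leq k}\msf{Bar}_\cal{O}A\simeq r_k^*(r_k)_!A=\rho_kA$. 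Hence $\msf{Cobar}_{\msf{Bar}(\cal{O})}\msf{Bar}_\cal{O}A\simeq\lim_k\rho_kA$, and, chasing naturality, the unit of (i) is identified with the completion map $A\to\lim_k\rho_kA$ of the Goodwillie tower. By the Harper--Hess convergence result quoted in Remark \ref{goodwillie}, this map is an equivalence whenever $A$ is $0$-connected, which proves (i).

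The genuinely harder point is (ii) (and the second half of (iii)): because $\msf{Bar}_\cal{O}$ is a left adjoint it does not commute with the limit $C\simeq\lim_k\tau^{\leq k}C$, so the dual of the argument above does not run formally. Instead I would argue directly with the cosimplicial cobar resolution $\msf{Cobar}^\bullet(\bb{1},\msf{Bar}(\cal{O}),C)$, showing that the $0$-connectivity hypothesis forces its coface maps to become increasingly connected --- this is precisely where $(-1)$-connectivity of $\cal{O}$, and hence connectivity of iterated composition powers, enters, via connectivity estimates of Harper--Hess type --- so that the totalization behaves well and $\msf{Cobar}_{\msf{Bar}(\cal{O})}C$ is $0$-connected; equivalently, one shows the layers $\mathrm{fib}(\tau^{\leq k}C\to\tau^{\leq k-1}C)$ are roughly $k$-connected, so that the coalgebra tower converges against $\msf{Bar}_\cal{O}\msf{Cobar}_{\msf{Bar}(\cal{O})}$ as well. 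I expect this connectivity bookkeeping, together with pinning down that the essential image of $\msf{Bar}_\cal{O}$ on $0$-connected algebras is exactly the $0$-connected coalgebras, to be the main obstacle: it is essentially the content of \cite{ChingHarper}, and a self-contained treatment would have to reproduce those estimates. In short, the formal machinery of \S\ref{underlying}--\S\ref{induced} reduces the theorem to Harper--Hess-style connectivity bounds, which carry its real homotopy-theoretic weight.
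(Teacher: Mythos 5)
The paper does not prove this statement. It is imported as a black box from \cite[Thm.~1.2]{ChingHarper}, and it functions as one of the external inputs on which the rest of the paper depends (it is invoked in the proofs of Lemma \ref{connectedness} and Theorem \ref{level 2}). So there is no ``paper's own proof'' to compare your attempt against; what you have written is a fresh derivation attempt, which is a different kind of object.

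As a derivation, it is incomplete, and you acknowledge as much. Your sketch of (i) is plausible: the identifications $\msf{Cobar}_{\msf{Bar}(\cal{O})}\tau^{\leq k}\msf{Bar}_\cal{O}A\simeq\rho_kA$ via Lemma \ref{bar and left}, Corollary \ref{cobar and right}, Theorem \ref{level 1}, and Proposition \ref{algebra bar cobar is equiv}, together with the fact that $\msf{Cobar}$ preserves limits, the convergence of $\tau^{\leq\bullet}$ (Remark \ref{tau converge}), and the Harper--Hess convergence of $\rho_\bullet A$ (Remark \ref{goodwillie}), do combine to give $\msf{Cobar}_{\msf{Bar}(\cal{O})}\msf{Bar}_\cal{O}A\simeq\lim_k\rho_kA\simeq A$, modulo the naturality check identifying the adjunction unit with the completion map, which you gesture at but do not carry out. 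The essential gap is (ii) together with the $\msf{Cobar}$-half of (iii): that the counit $\msf{Bar}_\cal{O}\msf{Cobar}_{\msf{Bar}(\cal{O})}C\to C$ is an equivalence for $0$-connected $C$ and that $\msf{Cobar}_{\msf{Bar}(\cal{O})}$ preserves $0$-connectivity, so that the restriction is genuinely an equivalence of subcategories rather than merely a fully faithful embedding. You correctly flag that $\msf{Bar}_\cal{O}$ being a left adjoint blocks the dual of the argument in (i), and you correctly diagnose that what is needed is a quantitative connectivity analysis of the cosimplicial cobar resolution --- but you do not supply it, and this analysis is exactly the substance of \cite{ChingHarper}. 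So the proposal reduces the theorem to the theorem.

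One further caution if you try to fill the gap: Lemma \ref{connectedness}, the paper's own statement that $\msf{Bar}_\cal{O}$ and $\tau^{\leq k}$ preserve $0$-connectivity, itself cites \cite[Thm.~1.2]{ChingHarper}, so you cannot appeal to it without circularity. You wisely avoid citing it directly, and your assertion that $\ep_!A\simeq\iota^\flat\msf{Bar}_\cal{O}A$ is $0$-connected is in fact an elementary connectivity estimate on a geometric realization; but the corresponding estimate for the totalization defining $\msf{Cobar}_{\msf{Bar}(\cal{O})}$ is not elementary and is precisely what is missing.
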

\noindent We offer an independent proof of part of Theorem \ref{ch thm} below, see Corollary \ref{koszul equiv proof}.

We will also use the following special case of (\ref{fg conj}) which is proven in \cite[Prop. 6.9]{Heuts}. 
\begin{prop}[Heuts]\label{algebra bar cobar is equiv}
Let $\mathcal{O}\in\msf{Oprd}$. For any $k$, the bar and cobar constructions
\[\msf{Bar}_{\mathcal{O}_{\leq k}}\colon\msf{Alg}_{\mathcal{O}_{\leq k}}(\mrm{Sp})\rightleftarrows \msf{CoAlg}_{\msf{Bar}(\mathcal{O}_{\leq k})}^\mrm{nil,dp}(\mrm{Sp}):\msf{Cobar}_{\msf{Bar}(\mathcal{O}_{\leq k})}\]
 define an equivalence of categories.  
  \end{prop}
\noindent In particular, for $A\in\msf{Alg}_{\mathcal{O}_{\leq k}}(\msf{Sp})$, there is an equivalence 
$\msf{Cobar}_{\msf{Bar}(\mathcal{O}_{\leq k})}\msf{Bar}_{\mathcal{O}_{\leq k}}A\simeq A$.

We will need the following in the proof of Lemma \ref{connectedness} and Corollary \ref{main theorem}. 
\begin{lem}\label{connect}
Let $\mathcal{O}\in\msf{Oprd}$ be $(-1)$-connected, $A$ be a $0$-connected $\mathcal{O}$-algebra in $\msf{Sp}$, and $M$ be a $d$-connected right $\mathcal{O}$-module in $\msf{Sp}$. 
Then $\msf{Bar}\mathcal{O}$ is $(-1)$-connected, $\msf{Bar}_\mathcal{O}A$ is $0$-connected, and $\msf{Bar}_\mathcal{O}M$ is $d$-connected.
\end{lem}
\begin{proof}
The bar construction $\msf{Bar}_\mathcal{O}A$ is given by the geometric realization 
$|\msf{Bar}_\bullet(\bb{1},\mathcal{O},A)|$. 
Since $\bb{1}$ and $\mathcal{O}$ are $(-1)$-connected and $A$ is $0$-connected, each term $\bb{1}\circ\mathcal{O}^{\circ \bullet}\circ A$ is $0$-connected. 
By \cite[Cor. 1.2.1.6]{HA}, 
the geometric realization of a simplex whose terms are each $0$-connected, is $0$-connected. 
Applying \cite[Cor. 1.2.1.6]{HA} to the bar complexes defining $\msf{Bar}\mathcal{O}$ and $\msf{Bar}_\mathcal{O}M$ completes the proof in the other cases.
\end{proof} 
\begin{lem}\label{connectedness}
Let $\mathcal{O}\in\msf{Oprd}$ be $(-1)$-connected. Let $A$ be a $0$-connected $\mathcal{O}$-algebra in $\msf{Sp}$. For any $k$, the $\msf{Bar}(\mathcal{O})$-coalgebras $\msf{Bar}_\mathcal{O}(\rho_kA)$ and $\tau^{\leq k}(\msf{Bar}_\mathcal{O}A)$ are $0$-connected.
\end{lem}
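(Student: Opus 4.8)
The plan is to prove the two assertions separately, in each case reducing the statement about a $\msf{Bar}(\cal{O})$-coalgebra to a connectivity estimate for a geometric realization (resp. a finite tower). Throughout I use that the smash product of an $a$-connected and a $b$-connected spectrum is $(a+b+1)$-connected, that geometric realizations of simplicial spectra with $c$-connected terms are $c$-connected (the skeletal associated graded in degree $p$ is $\Sigma^p$ of a latching quotient, which is at least as connected as the $p$-th term), and that homotopy orbits by a finite group do not lower connectivity.

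\emph{The claim for $\msf{Bar}_\cal{O}(\rho_kA)$.} I would first check that $\rho_kA$ is a $0$-connected $\cal{O}$-algebra. Its underlying spectrum is $\cal{O}_{\leq k}\circ_\cal{O}A\simeq|\msf{Bar}_\bullet(\cal{O}_{\leq k},\cal{O},A)|$ (restriction along $r_k$ does not change the underlying object), whose $p$-th level is $\cal{O}_{\leq k}\circ\cal{O}^{\circ p}\circ A$. Since $\cal{O}$ is reduced and $(-1)$-connected, $\cal{O}\circ X$ is $0$-connected whenever $X$ is: the arity-$1$ summand is $X$ because $\cal{O}(1)\simeq\bb{1}_\msf{Sp}$, the arity-$0$ summand vanishes, and the arity-$n$ summand $\cal{O}(n)\otimes_{\Sigma_n}X^{\otimes n}$ is $(n-1)$-connected for $n\geq2$; the same holds with $\cal{O}_{\leq k}$. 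Iterating, each level $\cal{O}_{\leq k}\circ\cal{O}^{\circ p}\circ A$ is $0$-connected, so $\rho_kA$ is a $0$-connected $\cal{O}$-algebra. Now apply the same estimate to $\msf{Bar}_\cal{O}$ itself: for any augmented $\cal{O}$-algebra $B$ the underlying object of $\msf{Bar}_\cal{O}B$ is the cotangent complex $\ep_!B\simeq\cal{O}_\msf{triv}\circ_\cal{O}B\simeq|\msf{Bar}_\bullet(\cal{O}_\msf{triv},\cal{O},B)|$, with levels $\cal{O}^{\circ p}\circ B$, which are $0$-connected when $B$ is. Taking $B=\rho_kA$ (note $\rho_kA$ has been restricted back to an honest $\cal{O}$-algebra, so $\msf{Bar}_\cal{O}$ is the relevant functor) gives that $\msf{Bar}_\cal{O}(\rho_kA)$ is $0$-connected.

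\emph{The claim for $\tau^{\leq k}(\msf{Bar}_\cal{O}A)$.} Write $C=\msf{Bar}_\cal{O}A$ and $\cal{P}=\msf{Bar}(\cal{O})$; the previous paragraph applied with $B=A$ shows that $C$ is a $0$-connected $\cal{P}$-coalgebra with underlying object $LA$. I would then work through the finite tower $\tau^{\leq k}C\rta\tau^{\leq k-1}C\rta\cdots\rta\tau^{\leq1}C$ and the long exact sequences of the fiber sequences $\hofib(\tau^{\leq j}C\rta\tau^{\leq j-1}C)\rta\tau^{\leq j}C\rta\tau^{\leq j-1}C$: it suffices to show $\tau^{\leq1}C$ and each layer $\hofib(\tau^{\leq j}C\rta\tau^{\leq j-1}C)$ for $2\leq j\leq k$ is $0$-connected, since then $\tau^{\leq k}C$ is a finite iterated extension of $0$-connected spectra. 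By Example \ref{k=1 filtrations}, $\tau^{\leq1}C$ is the cofree $\cal{P}$-coalgebra on $LA$, whose underlying object is $\msf{Bar}(\cal{O})\circ LA=\bigoplus_{n\geq1}\msf{Bar}(\cal{O})(n)\otimes_{\Sigma_n}(LA)^{\otimes n}$; the $n=1$ summand is $LA$, and for $n\geq2$ the summand is $(\geq1)$-connected because $\msf{Bar}(\cal{O})(n)$ is $(-1)$-connected (its skeletal filtration begins with $\Sigma\cal{O}(n)$, the arity-$1$ part of the augmentation ideal of $\cal{O}$ being trivial) and $(LA)^{\otimes n}$ is $(n-1)$-connected. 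Likewise, $\hofib(\tau^{\leq j}C\rta\tau^{\leq j-1}C)$ is the arity-exactly-$j$ homogeneous layer of the $\tau^{\leq\bullet}$-tower, built (up to a bounded shift that does not affect the estimate) from $\msf{Bar}(\cal{O})(j)\otimes_{\Sigma_j}(LA)^{\otimes j}$, hence $(\geq0)$-connected for $j\geq2$. The long exact sequences then give that $\tau^{\leq k}(\msf{Bar}_\cal{O}A)$ is $0$-connected.

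\emph{Main obstacle.} The delicate, though elementary, part is the coalgebra-side bookkeeping in the last paragraph: identifying precisely — in terms of the definitions of \cite{FG} and \cite{Heuts} — the underlying object of the cofree ind-nilpotent divided-power $\cal{P}$-coalgebra and of the layers of the $\tau^{\leq\bullet}$-tower, keeping track of the shifts, and confirming that the divided-power (homotopy-orbit) construction preserves connectivity in the form needed. This is exactly the ``involved but elementary'' content the statement warns about; everything else is a routine connectivity count.
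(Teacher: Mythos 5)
For the first assertion (that $\msf{Bar}_\cal{O}(\rho_kA)$ is $0$-connected) your argument is essentially a self-contained redo of what the paper outsources: the paper cites \cite[Thm.~1.2]{ChingHarper} for the fact that $\msf{Bar}_\cal{O}$ preserves $0$-connectedness and \cite[Prop.~4.33]{HarperHess} for the fact that $\rho_k$ does; you prove both directly by estimating connectivities of the levels of the relevant two-sided bar constructions. That is fine and arguably more illuminating, but it is the same underlying idea.

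For the second assertion (that $\tau^{\leq k}(\msf{Bar}_\cal{O}A)$ is $0$-connected) your route is genuinely different from the paper's. The paper writes $\tau^{\leq k}C = \msf{Tot}(\msf{Cobar}^\bullet(\cal{P},\cal{P}^{\leq k},C))$ and runs the Bousfield--Kan spectral sequence for that cosimplicial spectrum, showing by an arity-by-arity count that $N\pi_pX^q=0$ for $p\leq q$; the input is only the $(-1)$-connectivity of $\cal{P}$ and $\cal{P}^{\leq k}$ (itself deduced from Theorem \ref{level 1}) and the $0$-connectivity of $C$. You instead decompose the finite tower $\tau^{\leq k}C\rta\cdots\rta\tau^{\leq1}C$ into its layers and estimate each one. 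The endpoint $\tau^{\leq 1}C\simeq\eta_\sharp\eta^\flat C$ is handled correctly via Example \ref{k=1 filtrations}. The trade-off: your route is structurally cleaner and mirrors the algebra side (Lemma \ref{fiber layer}), whereas the paper's Bousfield--Kan computation is more hands-on but requires no structural input about the layers of the $\tau$-tower.

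That structural input is, however, exactly where your proposal has a real gap, and you half-acknowledge it. You assert that $\hofib(\tau^{\leq j}C\rta\tau^{\leq j-1}C)$ is ``built (up to a bounded shift that does not affect the estimate)'' from $\msf{Bar}(\cal{O})(j)\otimes_{\Sigma_j}(LA)^{\otimes j}$, i.e.\ you are invoking a coalgebraic analogue of Lemma \ref{fiber layer}. This analogue is not stated in the paper or in the references it cites for the algebra case, and ``bounded shift'' is doing a lot of unexamined work: a negative shift would destroy the estimate. You would need to prove (or locate a reference proving) that the layer is $\eta_\sharp\bigl(\cal{P}(j)\otimes_{\Sigma_j}(\eta^\flat C)^{\otimes j}\bigr)$ with no desuspension, and that $\eta_\sharp$ for ind-nilpotent divided-power coalgebras is indeed $\cal{P}\circ(-)$ with homotopy orbits (not fixed points) — fixed points would not preserve connectivity, and this is precisely what the ``dp'' decoration is there to guarantee, but it needs to be said. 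Until those two points are pinned down, the second half of your proof is a plausible outline rather than a proof, and the paper's Bousfield--Kan argument is the safer route because it never needs the layer identification.
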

\begin{proof}
By Lemma \ref{connect}, 
the coalgebra $\msf{Bar}_\mathcal{O}(A)$ takes $0$-connected $\mathcal{O}$-algebras to $0$-connected coalgebras. 
Thus it suffices to show that $\rho_k$ and $\tau^{\leq k}$ preserve $0$-connectedness. 
The fact that $\rho_kA$ is $0$-connected is part of \cite[Prop. 4.33]{HarperHess}. 
Let $\mathcal{P}=\msf{Bar}(\mathcal{O})$ and let $C$ be a $0$-connected $\mathcal{P}$-coalgebra. 
By Lemma \ref{adjoint as cobar}, 
$\tau^{\leq k}C$ can be computed as 
$\msf{Tot}(\msf{Cobar}^\bullet(\mathcal{P},\mathcal{P}^{\leq k},C))$. 
By Lemma \ref{connect}, the bar construction takes a $(-1)$-connected operad to a $(-1)$-connected cooperad. 
Thus $\mathcal{P}$ is $(-1)$-connected. 
By Theorem \ref{level 1}, the cooperad $\mathcal{P}^{\leq k}$ is $(-1)$-connected. 
Let $\mathcal{Q}\colonequals \mathcal{P}^{\leq k}$. 
We use the Bousfield-Kan spectral sequence to compute 
$\pi_i(\tau^{\leq k} C)=\pi_i\left(\msf{Tot}(\msf{Cobar}^\bullet(\mathcal{P},\mathcal{Q},C))\right)$. 
For the set-up of the Bousfield-Kan spectral sequence in this setting, see \cite[\S 1.2.4]{HA}. 

We would like to show that $\pi_i(\tau^{\leq k}C)$ vanish for $i\leq 0$. Let $X^\bullet$ denote the cosimplicial object $X^\bullet\colonequals \msf{Cobar}^\bullet(\mathcal{P},\mathcal{Q},C)$.
By \cite[Ch. X, Prop. 6.3]{BousfieldKan}, the Bousfield-Kan spectral sequence has $E_1$ page $E_1^{p,q}X^\bullet=N\pi_p X^q$
where $N\pi_pX^q$ is the intersection of the codegeneracy maps on $\pi_p$,
\[N\pi_pX^q=\pi_pX^q\cap \mrm{ker}(s^0_*)\cap \cdots \cap \mrm{ker}(s_*^{q-1}).\]
The spectral sequence converges to $\pi_{p-q}\msf{Tot}(X^\bullet)$. 
To check that $\msf{Tot}(X^\bullet)$ is 0-connected, it suffices to check that $E_1^{p,q}X^\bullet =0$ for $p\leq q$. We prove this by induction on $q$. First, we check the rather trivial case of $q=0$ and then our base case of $q=1$.

Take $q=0$. Then we have equivalences
\[N\pi_pX^0=\pi_pX^0=\pi_p(\mathcal{P}\circ C)=\pi_p\left(\bigvee_i \mathcal{P}(i)\wedge_{\Sigma_i}C^{\wedge i}\right).\]
Since $\mathcal{P}(i)$ is $(-1)$-connected for every $i$, and $C$ is 0-connected, the resulting spectrum $\mathcal{P}\circ C$ is $0$-connected. Thus $N\pi_p X^0$ vanishes for $p\leq 0$.

Take $q=1$. Then we have equivalences
\[N\pi_pX^1=\pi_pX^1\cap \mrm{ker}(s_*^0)=\pi_p(\mathcal{P}\circ\mathcal{Q}\circ C)\cap\mrm{ker}(s_*^0).\]
The codegeneracy map $s_*^0:\pi_pX^1\rta \pi_pX^0$ is induced from the counit $\mathcal{Q}\rta\mathcal{O}_\msf{triv}$. More explicitly, $s^0$ is the projection
\[\mathcal{P}\circ \mathcal{Q}\circ C=\mathcal{P}\circ\left(\bigvee_i\mathcal{Q}(i)\wedge_{\Sigma_i} C^{\wedge i}\right)\rta \mathcal{P}\circ \left(\mathcal{Q}(1)\wedge C\right)\simeq \mathcal{P}\circ C.\]
The kernel of the induced map on $\pi_p$ is  
$N\pi_pX^1=\pi_p\left(\mathcal{P}\circ U_1\right)$ 
where $U^1$ is the spectrum
$U^1=\bigvee_{i\geq 2}\mathcal{Q}(i)\wedge_{\Sigma_i}C^{\wedge i}$. 
The least connected term in the wedge sum $U^1$ is ${\mathcal{Q}(2)\wedge_{\Sigma_2}C^{\wedge 2}}$.
This term is $1$-connected. 
Since $\mathcal{P}$ is $(-1)$-connected, $N\pi_pX^1$ is the $p$th homotopy group of a $1$-connected spectrum. 

Let $U^{q}$ is the subset of $\mathcal{Q}^{\circ q}\circ C$ given by the intersection 
$U^{q}=\mrm{ker} s^0\cap\cdots\cap \mrm{ker} s^{q}$ 
of the kernels of the counit maps on each factor of $\mathcal{Q}$.
Assume that $U^{q}$ is ${(2^q-1)}$-connected. 
Since $N\pi_pX^q=\pi_p(\mathcal{P}\circ U^q)$, this implies that $N\pi_pX^q$ is the $p$th homotopy group of a ${(2^q-1)}$-connected spectrum. 
In particular, $N\pi_p X^q=0$ for $p\leq q$.

We now prove the inductive step. The term $U^{q+1}$ is $(\mathcal{Q}\circ U^q)\cap \mrm{ker}(s^{q+1})$.
The kernel $\mrm{ker}(s_*^{q+1})$ is 
\[\mrm{ker}(s^{q+1})=\bigvee_{i\geq 2}\mathcal{Q}(i)\wedge_{\Sigma_i}(\mathcal{Q}^{\circ q}\circ C)^{\wedge i}.\]
Thus the intersection $(\mathcal{Q}\circ U^q)\cap \mrm{ker}(s^{q+1})$ is 
$U^{q+1}=\bigvee_{i\geq 2}\mathcal{Q}(i)\wedge_{\Sigma_i}( U^{q})^{\wedge i}$. 
The least connected term in this wedge sum is $Q(2)\wedge_{\Sigma_2}(U^q)^{\wedge 2}$, 
which is $(2^{q+1}-1)$-connected. 
This completes the inductive step.

Our inductive argument shows that $N\pi_p X^{q}=0$ for $p\leq q$ for all $q$.
Thus the terms on the $E_1$ page of the Bousfield-Kan spectral sequence that contribute to $\pi_0\msf{Tot}(X^\bullet)$ all vanish. 
Hence, $\pi_0\msf{Tot}(X^\bullet)=0$. 
This proves that $\tau^{\leq k}C$ is $0$-connected for all $k$.
 \end{proof}

\begin{thm}\label{level 2}
Let $\mathcal{O}\in\msf{Oprd}$ be $(-1)$-connected and let $A$ be a $0$-connected $\mathcal{O}$-algebra in $\msf{Sp}$. Then there is an equivalence of filtrations of $\msf{Bar}(\mathcal{O})$-coalgebras in $\msf{Sp}$,
\[\msf{Bar}_{\mathcal{O}}(\rho_\bullet A)\simeq \tau^{\leq \bullet}(\msf{Bar}_\mathcal{O}A).\]
\end{thm}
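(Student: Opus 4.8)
The plan is to rewrite the $k$-th stage $\rho_k A = r_k^*(r_k)_!A$ of the Goodwillie tower entirely in terms of the cobar construction for $\msf{Bar}(\cal{O})$, and to invoke the Ching--Harper Theorem \ref{ch thm} only once, at the very end. Everything up to that last step is formal: it uses only the truncated bar--cobar equivalence of Proposition \ref{algebra bar cobar is equiv}, which requires no connectivity hypothesis.

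Fix $k$. Because $\cal{O}_{\leq k}$ is $k$-truncated, Proposition \ref{algebra bar cobar is equiv} gives a natural equivalence $B\simeq\msf{Cobar}_{\msf{Bar}(\cal{O}_{\leq k})}\msf{Bar}_{\cal{O}_{\leq k}}B$ for every augmented $\cal{O}_{\leq k}$-algebra $B$. By Theorem \ref{level 1} we may identify $\msf{Bar}(\cal{O}_{\leq k})$ with $(\msf{Bar}(\cal{O}))^{\leq k}$, and the map $\msf{Bar}(\cal{O})\rta\msf{Bar}(\cal{O}_{\leq k})$ induced by $r_k$ with the tower map $s_k$. Applying $r_k^*$ to the equivalence above, then Corollary \ref{cobar and right} for $r_k$ (together with $\msf{Bar}(r_k)\simeq s_k$), then specializing to $B=(r_k)_!A$ and invoking Lemma \ref{bar and left} in the form $\msf{Bar}_{\cal{O}_{\leq k}}(r_k)_!A\simeq s_k^\flat\msf{Bar}_\cal{O}A$, I obtain
\[\rho_k A \;\simeq\; r_k^*\,\msf{Cobar}_{\msf{Bar}(\cal{O}_{\leq k})}\,\msf{Bar}_{\cal{O}_{\leq k}}(r_k)_!A \;\simeq\; \msf{Cobar}_{\msf{Bar}(\cal{O})}((s_k)_\sharp s_k^\flat\,\msf{Bar}_\cal{O}A) \;\simeq\; \msf{Cobar}_{\msf{Bar}(\cal{O})}(\tau^{\leq k}\msf{Bar}_\cal{O}A),\]
where the last equivalence is the definition of $\tau^{\leq k}$. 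Now apply $\msf{Bar}_\cal{O}$. By Lemma \ref{connectedness} the coalgebra $\tau^{\leq k}(\msf{Bar}_\cal{O}A)$ is $0$-connected, so by Theorem \ref{ch thm} the functors $\msf{Bar}_\cal{O}$ and $\msf{Cobar}_{\msf{Bar}(\cal{O})}$ restrict to mutually inverse equivalences between $0$-connected $\cal{O}$-algebras and $0$-connected $\msf{Bar}(\cal{O})$-coalgebras; hence $\msf{Bar}_\cal{O}\msf{Cobar}_{\msf{Bar}(\cal{O})}$ is the identity there, giving
\[\msf{Bar}_\cal{O}(\rho_k A)\;\simeq\;\msf{Bar}_\cal{O}\,\msf{Cobar}_{\msf{Bar}(\cal{O})}(\tau^{\leq k}\msf{Bar}_\cal{O}A)\;\simeq\;\tau^{\leq k}(\msf{Bar}_\cal{O}A).\]

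To promote this to an equivalence of towers I would run the same chain functorially in the tower variable. The tower $\cal{O}_{\leq\bullet}$ of operads, the adjunctions $(r_\bullet)_!\dashv r_\bullet^*$ and $s_\bullet^\flat\dashv(s_\bullet)_\sharp$, the bar and cobar functors, and each cited equivalence (Theorem \ref{level 1}, Lemma \ref{bar and left}, Corollary \ref{cobar and right}, Proposition \ref{algebra bar cobar is equiv}, Theorem \ref{ch thm}) are all natural in the structure maps $\cal{O}_{\leq k}\rta\cal{O}_{\leq k-1}$ and their duals $(\msf{Bar}(\cal{O}))^{\leq k}\rta(\msf{Bar}(\cal{O}))^{\leq k-1}$. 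Hence the displayed string of equivalences is compatible with the maps in both towers, and assembling over $\bullet$ yields $\msf{Bar}_\cal{O}(\rho_\bullet A)\simeq\tau^{\leq\bullet}(\msf{Bar}_\cal{O}A)$.

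I expect the main obstacle to be bookkeeping: checking carefully that $\msf{Bar}(r_k)$ really is $s_k$ under Theorem \ref{level 1}, that the induced $\sharp$ and $\flat$ functors are precisely those used to define $\tau^{\leq k}$, and --- more seriously --- arranging the naturality of the whole composite coherently enough to conclude an equivalence of diagrams rather than a mere levelwise statement, which calls for working at the level of functors rather than objects.
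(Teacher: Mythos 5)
Your proposal is essentially the paper's proof run in the opposite direction: the paper applies $\msf{Cobar}_{\msf{Bar}(\cal{O})}$ to both towers and uses Lemma \ref{connectedness} together with Theorem \ref{ch thm} to know this is a sufficient check, whereas you first establish $\rho_k A\simeq\msf{Cobar}_{\msf{Bar}(\cal{O})}(\tau^{\leq k}\msf{Bar}_\cal{O}A)$ and then apply $\msf{Bar}_\cal{O}$ and cancel by the same two results. Both arguments rely on exactly the same chain (Theorem \ref{level 1}, Lemma \ref{bar and left}, Corollary \ref{cobar and right}, Proposition \ref{algebra bar cobar is equiv}, Lemma \ref{connectedness}, and Ching--Harper), so this is the same approach.
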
 
\begin{proof}
By Lemma \ref{connectedness}, 
$\msf{Bar}_{\mathcal{O}}(\rho_\bullet A)$ and $\tau^{\leq \bullet}(\msf{Bar}_\mathcal{O}A)$ are both 0-connected $\msf{Bar}(\mathcal{O})$-coalgebras. 
By \cite[Thm. 1.2]{ChingHarper}, 
it suffices to show that there is an equivalence after applying $\msf{Cobar}_{\msf{Bar}(\mathcal{O})}$,
\[\msf{Cobar}_{\msf{Bar}(\mathcal{O})}\msf{Bar}_{\mathcal{O}}(\rho_k A)\simeq \msf{Cobar}_{\msf{Bar}(\mathcal{O})}\tau^{\leq k}(\msf{Bar}_\mathcal{O}A).\]
By Theorem \ref{bar cobar is equiv}, the left-hand side is equivalent to $\rho_k A=(r_k)^*(r_k)_!A$. By definition, 
\[\tau^{\leq k}\msf{Bar}_\mathcal{O}A=(s_k)_\sharp(s_k)^\flat \msf{Bar}_\mathcal{O}A.\]
Using the equivalence of Theorem \ref{level 1},
$s_k=\msf{Bar}(r_k)$, 
so that we have
\[(s_k)_\sharp(s_k)^\flat \msf{Bar}_\mathcal{O}A=\msf{Bar}(r_k)_\sharp\msf{Bar}(r_k)^\flat \msf{Bar}_\mathcal{O}A.\]
 Applying Lemma \ref{bar and left} and Corollary \ref{cobar and right}, we have an equivalence
 \begin{align*}
\msf{Cobar}_{\msf{Bar}(\mathcal{O})}\msf{Bar}(r_k)_\sharp\msf{Bar}(r_k)^\flat (\msf{Bar}_\mathcal{O}(A))&\simeq (r_k)^*\msf{Cobar}_{\msf{Bar}(\mathcal{O}_{\leq k})}\msf{Bar}_{\mathcal{O}_{\leq k}}((r_k)_!A).
\end{align*}
By Proposition \ref{algebra bar cobar is equiv}, there is an equivalence 
$\msf{Cobar}_{\msf{Bar}(\mathcal{O}_{\leq k})}\msf{Bar}_{\mathcal{O}_{\leq k}}((r_k)_!A)\simeq (r_k)_!A$. 
Thus there is an equivalence 
\[\msf{Cobar}_{\msf{Bar}(\mathcal{O})}\tau^{\leq k}(\msf{Bar}_\mathcal{O}(A))\simeq (r_k)^*(r_k)_!A=\rho_kA.\qedhere\]
\end{proof}
\begin{rmk}\label{bar equiv is our equiv}
If the Francis-Gaitsgory conjecture (\ref{fg conj}) is proven true, Theorem \ref{level 2} can be generalized from $0$-connected $\mathcal{O}$-algebras in $\msf{Sp}$ to homotopy pro-nilpotent $\mathcal{O}$-algebras in $\mathcal{V}$, using the same proof. In order to discuss connectivity in a more general $\infty$-category $\mathcal{V}$, one needs to assume that $\mathcal{V}$ has a t-structure compatible with the symmetric monoidal structure. If $\mathcal{V}$ has a t-structure, one could apply similar techniques to those employed in \cite{ChingHarper} and to analyze whether the bar construction on $\msf{Alg}_\mathcal{O}(\mathcal{V})$ is an equivalence when restricted to $0$-connected $\mathcal{O}$-algebras. If this generalization of Theorem \ref{ch thm} and of Proposition \ref{algebra bar cobar is equiv} hold, then Theorem \ref{level 2} holds over $\mathcal{V}$ as well. 
\end{rmk}
\begin{ex}\label{koszul of triv is free}
We reinterpret the result of Theorem \ref{level 2} in the case $k=1$. By Example \ref{k=1 filtrations}, we can identify $\rho_1A$ as the trivial augmented $\mathcal{O}$-algebra on the cotangent space, $\ep^*\ep_!A$ and $\tau^{\leq 1}\msf{Bar}_\mathcal{O}A$ as the cofree coaugmented $\msf{Bar}(\mathcal{O})$-coalgebra on the underlying object of $\msf{Bar}_{\mathcal{O}}A$. Theorem \ref{level 2} then says that the bar construction sends the trivial augmented algebra to the cofree cooaugmented coalgebra. See \cite[Eq. (3.4)]{FG}.
\end{ex}
\section{Factorization Homology for General Operads}\label{general operad}
For a general operad $\mathcal{O}$, factorization homology for $\mathcal{O}$ will take as input a right $\mathcal{O}$-module and an $\mathcal{O}$-algebra, both in $\mathcal{V}$, and output an object of $\mathcal{V}$. This construction is an example of a relative tensor product, or two-sided bar construction. 
\begin{defn}\label{FactHomDef}
Let $\mathcal{O}\in\msf{Oprd}$. Let $M$ be a right $\mathcal{O}$-module in $\mathcal{V}$ and $A$ be an $\mathcal{O}$-algebra in $\mathcal{V}$. The \emph{factorization homology} of $M$ with coefficients in $A$ is the relative tensor product 
$\int_MN\colonequals M\circ_{\mathcal{O}}A=|\msf{Bar}_\bullet(M,\mathcal{O},A)|$ 
which is an object of $\mathcal{V}$.
\end{defn}
\begin{ex}\label{fact hom of free}
Let $\mathcal{O}$ be an operad in $\msf{Sp}$. We compute the factorization homology of a free $\mathcal{O}$-algebra. Let $M$ be a right $\mathcal{O}$-module in $\mathcal{V}$ and let $V\in\mathcal{V}$. Recall that the free $\mathcal{O}$-algebra on $V$ is $\iota_!V\simeq\mathcal{O}\circ V$. There are equivalences in $\mathcal{V}$,
\[M\circ_{\mathcal{O}}\iota_!V\simeq M\circ_{\mathcal{O}}\mathcal{O}\circ V\simeq M\circ V=\bigoplus_pM(p)\otimes_{\Sigma_p}V^{\otimes p}.\]
The reader should compare this to the calculation \cite[Thm. 2.4.1]{PKD} 
of the factorization homology of a free $n$-disk algebra.
To make these computations agree,
one should take the operad to be the little $n$-disks operad $\mathbb{E}_n$
and the right $\mathbb{E}_n$-module $\mathbb{E}_X$ corresponding to configurations of a framed $n$-manifold $X$.
See \S \ref{littlediskssection} for further discussion along these lines.
\end{ex}
\begin{defn}
Let $\mathcal{P}\in\msf{CoOprd}$. Let $W$ be a right $\mathcal{P}$-comodule in $\mathcal{V}$ and $C$ be a ind-nilpotent $\mathcal{P}$-coalgebra with divided powers in $\mathcal{V}$. The \emph{factorization cohomology} of $W$ with coefficients in $C$ is the relative cotensor product
\[\int^WC\colon= W\square_{\mathcal{P}}K=\msf{Tot}(\msf{Cobar}^\bullet(W,\mathcal{P},C)).\]
\end{defn}
We compute factorization homology and cohomology over $\mathcal{O}_{\msf{triv}}$. 
\begin{lem}\label{fact hom over triv}
Let $S$ be a symmetric sequence in $\mathcal{V}$ and $T$ an object in $\mathcal{V}$. Viewing $S$ as a right $\mathcal{O}_\msf{triv}$-module and $T$ as a $\mathcal{O}_\msf{triv}$-algebra, there is an equivalence in $\mathcal{V}$,
\[\int_ST\simeq \bigoplus_pS(p)\otimes_{\Sigma_p}T^{\otimes p}.\]
Viewing $S$ as a right $\mathcal{O}_\msf{triv}$-comodule and $T$ as a ind-nilpotent $\mathcal{O}_\msf{triv}$-coalgebra with divided powers, there is an equivalence in $\mathcal{V}$, \[\int^ST\simeq \bigoplus_pS(p)\otimes_{\Sigma_p}T^{\otimes p}.\]
\end{lem}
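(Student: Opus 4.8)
The plan is to reduce both statements to the fact that $\cal{O}_\msf{triv}$ is the monoidal unit of $(\msf{Sseq}(\msf{Sp}),\circ)$, so that a relative tensor or cotensor product over $\cal{O}_\msf{triv}$ collapses to the plain composite $S\circ T:=\bigoplus_p S(p)\otimes_{\Sigma_p}T^{\otimes p}$; that this composite is the displayed direct sum is exactly the formula recalled after Lemma \ref{sseq action}, and it is the object $M\circ V$ appearing in Example \ref{fact hom of free}. Throughout one uses the identifications $\msf{RMod}_{\cal{O}_\msf{triv}}(\cal{V})\simeq\msf{Sseq}(\cal{V})$, $\msf{Alg}_{\cal{O}_\msf{triv}}(\cal{V})\simeq\cal{V}$ and their comodule/coalgebra analogues without further comment.

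For factorization homology: since the unit map $\iota\colon\cal{O}_\msf{triv}\to\cal{O}_\msf{triv}$ is the identity, every $\cal{O}_\msf{triv}$-algebra is free (Example \ref{triv and free}), so $T\simeq\iota_!T$. Therefore $\int_S T=S\circ_{\cal{O}_\msf{triv}}T\simeq S\circ_{\cal{O}_\msf{triv}}\iota_!T$, and the chain of equivalences in Example \ref{fact hom of free} (with $\cal{O}=\cal{O}_\msf{triv}$, right module $S$, and $V=T$) yields $S\circ_{\cal{O}_\msf{triv}}\iota_!T\simeq S\circ\cal{O}_\msf{triv}\circ T\simeq S\circ T=\bigoplus_p S(p)\otimes_{\Sigma_p}T^{\otimes p}$. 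Equivalently, one observes directly that the two-sided bar object $\msf{Bar}_\bullet(S,\cal{O}_\msf{triv},T)$ has all face, degeneracy, and multiplication maps equivalences and carries an extra degeneracy, hence is split augmented over $S\circ T$, so its geometric realization is $S\circ T$, just as in the proof of Lemma \ref{bar for triv}.

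For factorization cohomology: by definition $\int^S T=\msf{Tot}(\msf{Cobar}^\bullet(S,\cal{O}_\msf{triv},T))$. Since the cofree $\cal{O}_\msf{triv}$-coalgebra on $T$ is $\cal{O}_\msf{triv}\circ T\simeq T$, this cosimplicial object has the shape treated in the proof of Proposition \ref{end and right}: it is a split coaugmented cosimplicial object, coaugmented over $S\circ T$. Applying \cite[Lem. 6.1.3.16]{HTT} in $\cal{V}^{\mrm{op}}$, the coaugmentation $S\circ T\to\msf{Tot}(\msf{Cobar}^\bullet(S,\cal{O}_\msf{triv},T))$ is an equivalence, whence $\int^S T\simeq S\circ T=\bigoplus_p S(p)\otimes_{\Sigma_p}T^{\otimes p}$.

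The only step with genuine content — everything else being the bookkeeping above — is the splitness of the two-sided (co)bar construction over the monoidal unit, i.e.\ exhibiting the extra (co)degeneracy coherently in the $\infty$-categorical sense. This is the familiar contracting-homotopy phenomenon and is precisely the input already invoked in the proofs of Lemma \ref{bar for triv} and Proposition \ref{end and right}, so no new work is required.
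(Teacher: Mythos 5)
Your proof is correct and rests on the same mechanism as the paper's: because $\cal{O}_\msf{triv}$ is the monoidal unit, the two-sided (co)bar (co)simplicial object is split and so collapses to its zeroth term $S\circ T$, which is the displayed direct sum. The paper states this tersely (citing \cite[Rmk. 4.4.2.9]{HA} and noting the cobar case is the opposite-category version), while you spell out the extra (co)degeneracy and also offer the equivalent reformulation via $T\simeq\iota_!T$ and Example \ref{fact hom of free}; both are the same argument in substance, and your use of the split-cosimplicial technique from Proposition \ref{end and right} (not the proposition's statement, which would be circular) is correctly disentangled.
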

\begin{proof}
This is true more generally, see \cite[Rmk. 4.4.2.9]{HA}. The operad $\mathcal{O}_{\msf{triv}}$ is the trivial monoid object in $\msf{Sseq}(\msf{Sp})$. Hence the bar construction $\int_ST=|\msf{Bar}_\bullet(S,\mathcal{O}_\msf{triv},T)|$ reduces to its zeroeth space, $\msf{Bar}_0(S,\mathcal{O}_\msf{triv},T)=S\circ T$. 
Since the cobar construction is defined as the bar construction in the opposite category, the same proof applied to the trivial monoid object $\mathcal{O}_\msf{triv}$ in $\msf{Sseq}(\msf{Sp})^\mrm{op}$ proves the second claim. 
Indeed, $S\circ T$ for ind-nilpotent coalgebras with divided powers is defined by the same formula (\ref{eq-dpnil}) appearing in the Lemma statement.
\end{proof}
\subsection{Koszul Duality Arrow}\label{KoszulDualityArrow}
We show that for $\mathcal{O}\in\msf{Oprd}$, $M$ a right $\mathcal{O}$-module, and $A$ an $\mathcal{O}$-algebra, there is an arrow $\int_MA\rta\int^{\msf{Bar}_\mathcal{O}M}\msf{Bar}_\mathcal{O}A$. 
Following \cite{zero}, we refer to this arrow as the \emph{Koszul duality arrow}. Our goal is to find conditions under which this arrow is an equivalence.
\begin{convention}
For convenience, in the remainder of this paper, we restrict our attention to right (co)modules in $\msf{Sp}$. We continue to allow (co)algebras to be in a more general category $\mathcal{V}$. 
\end{convention}
Note that this corresponds to the level of generality considered in \cite{PKD}. The operad $\mathcal{E}_n$ and the right $\mathcal{E}_n$-module corresponding to a framed $n$-manifold are symmetric sequences in the same underlying category, whereas the $n$-disk algebras in \cite{PKD} are allowed to exist in a more general $\mathcal{V}$.
\begin{rmk}\label{relation to ching}
Ching has given a construction of a Koszul duality arrow using the language of trees, \cite[Eq. 7.18]{Ching}. 
This construction applies to a left $\mathcal{O}$-module, rather than an $\mathcal{O}$-algebra. Moreover, algebras are taken in $\msf{Sp}$, rather than a general $\mathcal{V}$. 
Ching's Koszul duality arrow is also discussed in \cite[Prop. 6.1]{ChingBar}, 
where it is proven to be an equivalence, 
under certain cofibrancy conditions. 
The proof in \cite{ChingBar} is similar to ours in concept, but differs in implementation. 
In particular, we use the language of $\infty$-categories, rather than model categories. 
\end{rmk}

Let $\mathcal{C}$ be a monoidal $\infty$-category acting on a presentable stable $\infty$-category $\mathcal{M}$ on the left. For an $\infty$-category $\mathcal{D}$, let $\msf{TwArr}(\mathcal{D})$ denote the twisted arrow $\infty$-category of $\mathcal{D}$. For a definition of twisted arrow $\infty$-categories, see \cite[Def. 7.29]{Loregian}, \cite[Def. 2.1]{Saul}, or \S 5.2.1 of \cite[Cons. 5.2.1.1]{HA}. 
Let $\pi_1$ denote the functor $\msf{TwArr}(\mathcal{D})\rta\mathcal{D}$ and ${\pi_2\colon \msf{TwArr}(\mathcal{D})\rta\mathcal{D}^{\mrm{op}}}$.

By \cite[Ex. 5.2.2.23]{HA}, the twisted arrow categories 
$\msf{TwArr}(\mathcal{C})$ and $\msf{TwArr}(\mathcal{M})$ 
are monoidal and the action of $\mathcal{C}$ on $\mathcal{M}$ induces an action of $\msf{TwArr}(\mathcal{C})$ on $\msf{TwArr}(\mathcal{M})$. Let $Q=(\mathcal{O}\rta \mathcal{P})$, a monoid 
object in $\msf{TwArr}(\mathcal{C})$, be fixed throughout the rest of this section.

We warn the reader that our notation differs from that in \cite{HA}. 
In particular, what we refer to as monoid objects, Lurie calls algebra objects. 
What we call $\msf{Alg}_Q$, Lurie refers to as left modules. 
With this notational difficulty in mind, by \cite[Ex. 5.2.2.23]{HA}, the functors $\pi_1$ and $\pi_2$ induce functors 
\begin{align*}
\msf{Mon}(\msf{TwArr}(\mathcal{C}))&\rta \msf{Mon}(\mathcal{C}) & & & & \msf{Mon}(\msf{TwArr}(\mathcal{C}))\rta \msf{CoMon}(\mathcal{C})^\mrm{op}\\
\msf{RMod}_Q(\msf{TwArr}(\mathcal{C}))&\rta \msf{RMod}_\mathcal{O}(\mathcal{C}) & & \textrm{and} & & \msf{RMod}_Q(\msf{TwArr}(\mathcal{C}))\rta \msf{RCoMod}_P(\mathcal{C})^\mrm{op}\\
\msf{Alg}_Q(\msf{TwArr}(\mathcal{M}))&\rta \msf{Alg}_\mathcal{O}(\mathcal{M}) & & & & \msf{Alg}_Q(\msf{TwArr}(\mathcal{M}))\rta \msf{CoAlg}_\mathcal{P}(\mathcal{M})^\mrm{op}.
\end{align*}
Let $(M\rta L, A\rta C)\in \msf{RMod}_Q(\msf{TwArr}(\mathcal{C}))\times \msf{Alg}_Q(\msf{TwArr}(\mathcal{M}))$. 
Apply \cite[4.4.2.7]{HA} in the case $\mathcal{C}_{\mfrk{a}_-}=\msf{TwArr}(\mathcal{C})$, $\mathcal{C}_{\mfrk{a}_+}$ is trivial, $\mathcal{C}_{\mfrk{m}}=\msf{TwArr}(\mathcal{M})$, $B=Q$, and when $A$ and $C$ are trivial. 
The result is the two-sided bar complex in $ \msf{TwArr}(\mathcal{M})$
\begin{align}\label{Bbull}
B_\bullet=\msf{Bar}_\bullet(M\rta L,\mathcal{O}\rta\mathcal{P},A\rta C).
\end{align}
By \cite[Ex. 5.2.2.23]{HA}, the functors $\pi_1,\pi_2$ are monoidal, so $\pi_1B_\bullet\simeq \msf{Bar}_\bullet(M,\mathcal{O},A)$ and 
$\pi_2B_\bullet\simeq \msf{Cobar}^\bullet(L,\mathcal{P},C)$. 
\begin{lem}\label{colim exists}
Let $K$ be a sifted simplicial set. 
Let $\mathcal{D}$ be an $\infty$-category that admits $K$-indexed colimits. 
Then $\msf{TwArr}(\mathcal{D})$ has all $D$-indexed colimits 
and these are preserved by $\pi_1$ and $\pi_2$.
\end{lem}
In particular, the geometric realization of the complex $B_\bullet$ of (\ref{Bbull}) exists in $\msf{TwArr}(\mathcal{M})$. 
The resulting functor is called the \emph{relative tensor product},
\[T\colon\msf{RMod}_Q(\msf{TwArr}(\mathcal{C}))\times \msf{Alg}_Q(\msf{TwArr}(\mathcal{M}))\rta \msf{TwArr}(\mathcal{M}).\]
\begin{proof}
By \cite[Prop. 5.2.1.3]{HA}, the map $\lambda\colon\mrm{TwArr}(\mathcal{D})\rta\mathcal{D}\times\mathcal{D}^\mrm{op}$ is a right fibration. 
This right fibration is classified by the functor $\mrm{Map}\colon\mathcal{D}^\mrm{op}\times\mathcal{D}\rta\msf{Spaces}$, \cite[Prop. 5.2.1.11]{HA}. 
Thus, $\mrm{TwArr}(\mathcal{D})$ is the pullback,
\[\begin{xymatrix}
{
\mrm{TwArr}(\mathcal{D})\arw[r]\arw[d]_\lambda &\msf{Spaces}_*^\mrm{op}\arw[d]_\gamma \\
\mathcal{D}\times\mathcal{D}^\mrm{op}\arw[r]_{\mrm{Map}^\mrm{op}} & \msf{Spaces}^\mrm{op}
}\end{xymatrix}\]
where $\gamma$, the universal right fibration, is the forgetful functor. 
By \cite[Prop. 1.2.13.8]{HTT}, the functor $\gamma$ preserves all colimits. 
The functor $\mrm{Map}^\mrm{op}$ preserves all $K$-indexed colimits by \cite[Prop. 5.1.3.2]{HTT} and  \cite[Prop. 5.5.8.6]{HTT}. 
By \cite[Lem. 5.4.5.5]{HTT}, the pullback $\mrm{TwArr}(\mathcal{D})$ has all $K$-indexed colimits and the functor $\lambda$ preserves them.
\end{proof}
\begin{cor}\label{koszul arrow}
Let $(M\rta L, A\rta C)\in \msf{RMod}_Q(\msf{TwArr}(\mathcal{C}))\times \msf{Alg}_Q(\msf{TwArr}(\mathcal{M}))$. 
The relative tensor product $T(M\rta L,A\rta C)$ determines a morphism in $\mathcal{M}$
\[|\msf{Bar}_\bullet(M,\mathcal{O},A)|\rta\msf{Tot}(\msf{Cobar}^\bullet(L,\mathcal{P},C)).\]
\end{cor}
Apply the above in the case $\mathcal{M}=\msf{Sp}$ and $\mathcal{C}=\msf{Sseq}(\msf{Sp})$. 
By \cite[Thm. 5.2.2.17]{HA}, $\mathcal{O}\in\msf{Oprd}$ has a universal monoid object living over it in $\msf{TwArr}(\msf{Sseq}(\msf{Sp}))$, which is given by an arrow $\mathcal{O}\rta\msf{Bar}\mathcal{O}$. 
Similarly, using \cite[Prop. 3.37]{PartitionLie}, 
we obtain universal lifts $M\rta\msf{Bar}_\mathcal{O}M$ and $A\rta\msf{Bar}_\mathcal{O}A$. 
\begin{defn}
Let $\mathcal{O}\in\msf{Oprd}$ be an operad, $A$ be an $\mathcal{O}$-algebra in $\mathcal{V}$, and $M$ a right $\mathcal{O}$-module in $\msf{Sp}$. 
The \emph{Koszul duality arrow} 
\[\int_MA\rta\int^{\msf{Bar}_\mathcal{O}M}\msf{Bar}_\mathcal{O} A\]
is the arrow from Corollary \ref{koszul arrow} applied to the universal objects in the twisted arrow categories living above $M,\mathcal{O}$, and $A$. 
\end{defn}
\begin{ex}\label{KD arrow for free}
Let $\mathcal{O}\in\msf{Oprd}$ and $A$ an $\mathcal{O}$-algebra. 
Consider $\mathcal{O}$ as a right $\mathcal{O}$-module. 
Then we have $\int_\mathcal{O}A=\mathcal{O}\circ_\mathcal{O}A\simeq A$. 
One can compare this computation to the computation of factorization homology of an $\mathcal{E}_n$-algebra over $\bb{R}^n$, \cite[Ex. 3.18]{AFT0}. 
The Koszul dual of the free $\mathcal{O}$-algebra $\mathcal{O}$ is the trivial coalgebra, 
$\msf{Bar}_{\mathcal{O}}(\mathcal{O})=\mathcal{O}\circ_{\mathcal{O}}\mathcal{O}_\msf{triv}=\mathcal{O}_{\msf{triv}}$. 
Thus we have equivalences
\[\int^{\msf{Bar}_\mathcal{O}\mathcal{O}}\msf{Bar}_{\mathcal{O}}A=\msf{Tot}(\msf{Cobar}^\bullet(\mathcal{O}_{\msf{triv}},\msf{Bar}\mathcal{O},\msf{Bar}_\mathcal{O}A))=\msf{Cobar}_{\msf{Bar}\mathcal{O}}(\msf{Bar}_\mathcal{O}A).\]
The Koszul duality arrow for the right $\mathcal{O}$-module $\mathcal{O}$ is then a map ${A\rta\msf{Cobar}_{\msf{Bar}\mathcal{O}}(\msf{Bar}_\mathcal{O}A)}$. 
\end{ex}
We prove that the Koszul duality arrow is an equivalence for the trivial operad.
\begin{lem}\label{linear pkd}
Let $S\in\msf{RMod}_{\mathcal{O}_\msf{triv}}(\spv)$ and ${T\in\msf{Alg}_{\mathcal{O}_\msf{triv}}(\mathcal{V})}$. Then the Koszul duality arrow
\[\int_ST\rta\int^{\msf{Bar}_{\mathcal{O}_\msf{triv}}S}\msf{Bar}_{\mathcal{O}_\msf{triv}}T\]
is an equivalence.
\end{lem}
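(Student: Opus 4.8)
The plan is to observe that, over the trivial operad, every construction entering the Koszul duality arrow degenerates, so that source and target both become $S\circ T$ and the arrow becomes the identity. First I would rewrite the target. By Lemma~\ref{bar for triv}, $\msf{Bar}_{\cal{O}_\msf{triv}}T\simeq T$ as an $\cal{O}_\msf{triv}$-coalgebra; the same argument (equivalently, the right-module case of \cite[Rmk.~4.4.2.9]{HA}) gives $\msf{Bar}_{\cal{O}_\msf{triv}}S\simeq S\circ_{\cal{O}_\msf{triv}}\cal{O}_\msf{triv}\simeq S$ as a right $\msf{Bar}(\cal{O}_\msf{triv})\simeq\cal{O}_\msf{triv}$-comodule. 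Hence the target is equivalent to the factorization cohomology $\int^S T$ over the trivial cooperad.

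Next I would identify both sides with $S\circ T$. Since $\cal{O}_\msf{triv}$ is the unit of the composition product, the two-sided bar construction has $\msf{Bar}_n(S,\cal{O}_\msf{triv},T)=S\circ\cal{O}_\msf{triv}^{\circ n}\circ T\simeq S\circ T$ with all faces and degeneracies equivalences, so $\int_S T=|\msf{Bar}_\bullet(S,\cal{O}_\msf{triv},T)|\simeq S\circ T$; dually, $\msf{Cobar}^\bullet(S,\cal{O}_\msf{triv},T)$ is the constant cosimplicial object on $S\circ T$, so $\int^S T\simeq S\circ T$. These are exactly the two statements of Lemma~\ref{fact hom over triv}.

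Finally I would check that the Koszul duality arrow coincides with the identity of $S\circ T$ under the identifications above. By construction the arrow is induced on geometric realization resp.\ totalization by the canonical comparison between the relative tensor product $\msf{Bar}_\bullet(S,\cal{O}_\msf{triv},T)$ and the relative cotensor product $\msf{Cobar}^\bullet(\msf{Bar}_{\cal{O}_\msf{triv}}S,\cal{O}_\msf{triv},\msf{Bar}_{\cal{O}_\msf{triv}}T)$; once both of these are the constant (co)simplicial object on $S\circ T$, this comparison is the identity, and the conclusion follows.

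The step I expect to be the only real obstacle is the last one: tracking the definition of the Koszul duality arrow of \cite{zero} through the identifications of the first two paragraphs carefully enough to see that it genuinely is the identity on $S\circ T$, rather than merely establishing an abstract equivalence between its source and target. This is bookkeeping rather than a substantive difficulty, but it is what keeps the statement from being an immediate corollary of Lemmas~\ref{bar for triv} and~\ref{fact hom over triv}.
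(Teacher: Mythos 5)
Your proposal takes essentially the same route as the paper: both first use Lemma \ref{bar for triv} to strip out the bar constructions, reducing the arrow to $\int_ST\rta\int^ST$, and then Lemma \ref{fact hom over triv} to identify both sides with $\bigoplus_p S(p)\otimes_{\Sigma_p}T^{\otimes p}$. The paper's proof is terser and silently passes over the compatibility point you flag in your final paragraph (that the Koszul duality arrow really does become the canonical map under these identifications), but the two arguments are otherwise identical.
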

\begin{proof}
By Lemma \ref{bar for triv}, we can identify $\msf{Bar}_{\mathcal{O}_\msf{triv}}$ with the identity functor so that the arrow in question is between $\int_ST$ and $\int^ST$. By Lemma \ref{fact hom over triv}, we have an identification
\[\int_ST\simeq\bigoplus_p S(p)\otimes_{\Sigma_p}T^{\otimes p}\simeq \int^S T.\qedhere\] 
\end{proof}
\subsection{Bar Constructions and Adjoints}
\begin{prop}\label{tensor and left}
Let $l\colon\mathcal{O}'\rta\mathcal{O}$ be a morphism in $\msf{Oprd}$. Let $M$ be a right $\mathcal{O}$-module in $\spv$ and $A$ an $\mathcal{O}'$-algebra in $\mathcal{V}$. Then there is an equivalence in $\mathcal{V}$,
\[\int_Ml_!A\simeq \int_{l^*M}A.\]
\end{prop} 
\begin{proof}
By associativity of the relative tensor product, \cite[Prop. 4.4.3.14]{HA}, we have equivalences
\[\int_Ml_!A=M\circ_{\mathcal{O}}(\mathcal{O}\circ_{\mathcal{O}'}A)\simeq (M\circ_\mathcal{O}\mathcal{O})\circ_{\mathcal{O}'}A.\]
By \cite[Prop. 4.4.3.16]{HA}, we have equivalences
$(M\circ_\mathcal{O}\mathcal{O})\circ_{\mathcal{O}'}A\simeq l^*M\circ_{\mathcal{O}'}A$.
\end{proof}
\begin{prop}\label{end and right}
Let $\eta\colon\mathcal{P}\rta\mathcal{O}_\msf{triv}$ be the counit of $\mathcal{P}\in\msf{CoOprd}$. For $S\in\msf{Sseq}(\spv)$ and $C\in\msf{CoAlg}_\mathcal{P}(\mathcal{V})$, 
there is an equivalence of objects in $\mathcal{V}$,
\[\int^{\eta_\sharp S}C\simeq \int^{S} \eta^\flat C.\]
\end{prop}
\begin{proof}
The comodule $\eta_\sharp C$ is equivalent to $C\circ \mathcal{P}$. 
Thus we have equivalences
\[\int^{\eta_\sharp S}C=\msf{Tot}(\msf{Cobar}^\bullet(\eta_\sharp S,\mathcal{P},C))\simeq \msf{Tot}(\msf{Cobar}^\bullet(S\circ\mathcal{P},\mathcal{P},C)).\]
Now $\msf{Cobar}^\bullet(S\circ\mathcal{P},\mathcal{P},C)\leftarrow S\circ C$ is a split coaugmented cosimplicial object. By \cite[Lem. 6.1.3.16]{HTT} applied to the opposite $\infty$-category, the induced map from $S\circ C$ to the totalization of the cobar complex is an equivalence,
\[\msf{Tot}(\msf{Cobar}^\bullet(S\circ\mathcal{P},\mathcal{P},C))\xleftarrow{\sim} S\circ C.\]
Lastly, by Lemma \ref{fact hom over triv}, we have an equivalence 
$S\circ C\simeq \int^{S}\eta^\flat V$. 
\end{proof}
\section{Proof of the Main Theorem}\label{sec-main}
The Koszul duality arrow was constructed using twisted arrow categories. 
To prove our result regarding when the Koszul duality arrow is an equivalence, 
we would like to enhance Lemmas \ref{bar and left} and \ref{cobar comod} and Propositions \ref{tensor and left} and \ref{end and right} 
to the level of twisted arrow categories. 

For this, we use the following notation. 
Let $\mathcal{O}\in\msf{Oprd}$. 
Let $Q=(\mathcal{O}\rta\msf{Bar}\mathcal{O})$ be the universal monoid object in $\msf{TwArr}(\msf{Sseq}(\mrm{Sp})$ living over $\mathcal{O}$. 
Set $\mathcal{P}=\msf{Bar}\mathcal{O}$. 
Let $\ep$ be the augmentation of $\mathcal{O}$ and $\eta$ the counit of $\msf{Bar}\mathcal{O}$. 
Use $\bb{1}_\mrm{Tw}$ to denote the unit monoid object ${\mrm{Id}\colon\mathcal{O}_\msf{triv}\rta\mathcal{O}_{\msf{triv}}}$. 

Let $\lambda\colon\msf{TwArr}(\mathcal{V})\rta\mathcal{V}\times\mathcal{V}^\mrm{op}$ and 
$\lambda'\colon \msf{TwArr}(\msf{Sseq}(\mrm{Sp}))\rta \msf{Sseq}(\mrm{Sp})\times(\msf{Sseq}(\mrm{Sp}))^\mrm{op}$ 
be the canonical right fibrations. 
Let 
\[\mu\colon\msf{Alg}_Q(\msf{TwArr}(\mathcal{V}))\rta \msf{Alg}_\mathcal{O}(\mathcal{V})\times(\msf{CoAlg}_\mathcal{P}(\mathcal{V}))^\mrm{op}\]
and 
\[\mu'\colon\msf{RMod}_Q(\msf{TwArr}(\msf{Sseq}(\mrm{Sp})))\rta \msf{RMod}_\mathcal{O}(\msf{Sp})\times(\msf{RCoMod}_\mathcal{P}(\msf{Sp}))^\mrm{op}\]
be the induced pairings. 

Our proofs will use the notation of right and left representable pairings \cite[Def. 5.2.1.8]{HA} and morphisms between these \cite[Var. 5.2.1.16]{HA}, as well as the duality functor associated to a representable pairing, \cite[Con. 5.2.1.9]{HA}.
\begin{lem}
The functors $\ep^*$ on right modules and $\eta_\sharp$ on right comodules induce a functor
\[(\ep^*,\eta_\sharp)\colon\msf{RMod}_{\bb{1}_\mrm{Tw}}(\msf{TwArr}(\msf{Sseq}(\mrm{Sp})))\rta\msf{RMod}_Q(\msf{TwArr}(\msf{Sseq}(\mrm{Sp}))).\]
The functors $\ep_!$ on algebras and $\eta^\flat$ on coalgebras induce a functor
\[(\ep_!,\eta^\flat)\colon\msf{Alg}_Q(\msf{TwArr}(\mathcal{V}))\rta\msf{Alg}_{\bb{1}_\mrm{Tw}}(\msf{TwArr}(\mathcal{V})).\]
\end{lem}
\begin{proof}
As in Example \ref{triv operad alg}, there are equivalences 
\[\msf{RMod}_{\bb{1}_\mrm{Tw}}(\msf{TwArr}(\msf{Sseq}(\mrm{Sp})))\simeq \msf{TwArr}(\msf{Sseq}(\mrm{Sp}))\]
and 
$\msf{Alg}_{\bb{1}_\mrm{Tw}}(\msf{TwArr}(\mathcal{V}))\simeq \msf{TwArr}(\mathcal{V})$. 
By \cite[Prop. 3.37]{PartitionLie}, $\lambda$, $\lambda'$, $\mu$, and $\mu'$ are both left and right representable with associated duality functors given by the appropriate bar-cobar adjoint pair. 

By the universal property of twisted arrow categories, \cite[Prop. 5.2.1.18]{HA}, 
the functors ${\ep_!\colon\msf{Alg}_\mathcal{O}(\mathcal{V})\rta\mathcal{V}}$ 
and $\eta_\sharp^\mrm{op}\colon(\msf{Sseq}(\msf{Sp}))^\mrm{op} \rta(\msf{RCoMod}_\mathcal{P}(\msf{Sp}))^\mrm{op}$ 
define morphisms of right representable pairings $\mu\rta\lambda$ and $\lambda'\rta\mu'$, respectively. 
By the proof of \cite[Prop. 5.2.1.18]{HA}, 
the resulting functor,
$\msf{Alg}_Q(\msf{TwArr}(\mathcal{V}))\rta\msf{TwArr}(\mathcal{V})$ 
is given on $\mathcal{O}$-algebras by $\ep_!$ and on $\mathcal{P}$-coalgebras by the duality functor applied to $\ep_!$. 
By \cite[Prop. 3.37]{PartitionLie}, the duality functor is $\msf{Bar}_{\mathcal{O}_\msf{triv}}$.
By Lemma \ref{bar and left}, ${\msf{Bar}_{\mathcal{O}_\msf{triv}}(\ep_!)\simeq\eta^\flat}$. 

Similarly, the resulting functor on right modules
\[\msf{RMod}_{\bb{1}_\mrm{Tw}}(\msf{TwArr}(\msf{Sseq}(\mrm{Sp})))\rta\msf{RMod}_Q((\msf{TwArr}(\msf{Sseq}(\mrm{Sp}))),\]
is given on right $\mathcal{P}$-comodules by $\eta_\sharp$ and right $\mathcal{O}$-modules by $\msf{Cobar}(\eta_\sharp)$. 
By Lemma \ref{cobar comod}, 
${\msf{Cobar}(\eta_\sharp)\simeq \ep^*}$. 
\end{proof}
\begin{lem}\label{tw vs}
Let $T_Q$ denote the relative tensor product functor over $Q$, and $T_{\bb{1}_\mrm{Tw}}$ that for $\bb{1}_\mrm{Tw}$. 
Then there is a natural isomorphism of functors
\[\msf{RMod}_{\bb{1}_\mrm{Tw}}(\msf{TwArr}(\msf{Sseq}(\mrm{Sp})))\times\msf{Alg}_Q(\msf{TwArr}(\mathcal{V}))\rta  \msf{TwArr}(\mathcal{V})\]
between $T_{\bb{1}_\mrm{Tw}}\circ(\mrm{Id}\times(\ep_!,\eta^\flat))$ and $T_Q\circ((\ep^*,\eta_\sharp)\times\mrm{Id})$. 
\end{lem} 
\begin{proof}
Let $F=\lambda\circ T_{\bb{1}_\mrm{Tw}}\circ(\mrm{Id}\times(\ep_!,\eta^\flat))$ and ${G=\lambda\circ T_Q\circ((\ep^*,\eta_\sharp)\times\mrm{Id})}$. 
By projecting onto the second factor of the twisted arrow categories, 
the functors $F$ and $G$ determine functors 
$\pi_2F,\pi_2G\colon\msf{Sseq}(\msf{Sp})\times\msf{Alg}_\mathcal{O}(\mathcal{V})\rta\mathcal{V}$. 
For a symmetric sequence $S$ and an $\mathcal{O}$-algebra $A$, we have 
$\pi_2F(S,A)=\int_S\ep_!A$ and $\pi_2G(S,A)=\int_{\ep^*S}A$.  
By the universal property of the twisted arrow category $\msf{TwArr}(\mathcal{V})$, 
\cite[Prop. 5.2.1.18]{HA}, 
applied to the right representable pairing $\lambda'\times\mu$, 
the functors $F$ and $G$ are determined by the functors $\pi_2F$ and $\pi_2G$. 
By Proposition \ref{tensor and left}, $\pi_2F$ and $\pi_2G$, and therefore $F$ and $G$,  are naturally isomorphic.
\end{proof}
Next, we identify the layer of the filtration on right modules.
\begin{lem}\label{fiber layer}
Let $\mathcal{O}\in\msf{Oprd}$ and $M$ a right $\mathcal{O}$-module in $\msf{Sp}$. 
Denote by $M_{=k}$ the symmetric sequence $M(k)$ concentrated in degree $k$.  
There is a fiber sequence of right $\mathcal{O}$-modules,
\[\ep^*M_{=k}\rta M_{\leq k}\rta M_{\leq k-1}.\]
\end{lem}
\begin{proof}
Let $L$ denote the fiber of the map of right $\mathcal{O}$-modules 
$M_{\leq k}\rta M_{\leq k-1}$. 
There is a map $l\colon L\rta \ep^*\iota^*L$, from $L$ to the trivial right module on the underlying symmetric sequence $\iota^*L$ of $L$. 
To show that the morphism $l$ of right $\mathcal{O}$-modules is an equivalence, 
it suffices to show that it is an equivalence on underlying symmetric sequences. 
The lemma follows from the observation that the underlying symmetric sequence of $L$ is $M_{=k}$. 
\end{proof}
\begin{thm}\label{level 3}
Let $\mathcal{O}\in\msf{Oprd}$. Let $A\in\msf{Alg}_\mathcal{O}(\mathcal{V})$ and $M\in\msf{RMod}_\mathcal{O}(\msf{Sp})$. Then the Koszul duality arrows form an equivalence of filtrations,
\[
\int_{M_{\leq \bullet}}A\xrta{\sim}   \int^{\msf{Bar}_{\mathcal{O}}(M_{\leq \bullet})}\msf{Bar}_{\mathcal{O}}A.\]
\end{thm}
\begin{proof}
To show that the arrow is an equivalence of filtrations, it suffices to show that the Koszul duality arrow restricts to an equivalence on layers. 

By \cite[Cor. 4.2.3.5]{HA}, cofiber sequences in $\msf{RMod}_\mathcal{O}(\msf{Sp})$ 
and fiber sequences in $\msf{RCoMod}_{\msf{Bar}\mathcal{O}}(\msf{Sp})$ 
are computed on underlying objects of $\msf{Sseq}(\msf{Sp})$. 
Thus $\msf{RMod}_\mathcal{O}(\msf{Sp})$ and $\msf{RCoMod}_{\msf{Bar}\mathcal{O}}(\msf{Sp})$ are stable. 
This implies that the fiber sequence 
from Lemma \ref{fiber layer} is also a cofiber sequence. 

Since $\msf{Sp}$ is $\otimes$-presentable, 
we get an induced cofiber sequence on bar complexes, 
\[\msf{Bar}_\bullet(\ep^*(M_{=k}),\mathcal{O},A)\rta \msf{Bar}_\bullet(M_{\leq k}),\mathcal{O},A)\rta \msf{Bar}_\bullet(M_{\leq k-1},\mathcal{O},A).\]
Taking geometric realization of simplicial complexes in $\msf{Sp}$ preserves cofiber sequences, so that we have cofiber sequences
\[\int_{\ep^*(M_{=k})}A\rta\int_{M_{\leq k}}A\rta\int_{M_{\leq k-1}}A\]
and as the special case when $A$ is trivial, 
\[\msf{Bar}_{\mathcal{O}}( \ep^* M_{=k})\rta \msf{Bar}_{\mathcal{O}}(M_{\leq k})\rta \msf{Bar}_{\mathcal{O}}( M_{\leq k-1}).\]
Since $\msf{Sp}$ and $\msf{RCoMod}_{\msf{Bar}\mathcal{O}}(\msf{Sp})$ are stable, these are also a fiber sequences. 
Now taking totalizations preserves fiber sequences, so we get a (co)fiber sequence in $\msf{Sp}$
\[\int^{\msf{Bar}_\mathcal{O}\ep^*(M_{=k})}\msf{Bar}_\mathcal{O}A\rta \int^{\msf{Bar}_\mathcal{O}(M_{\leq k})}\msf{Bar}_\mathcal{O}A\rta \int^{\msf{Bar}_\mathcal{O}(M_{\leq k-1})}\msf{Bar}_\mathcal{O}A.\]
The reader should compare the above argument with \cite[Lem. 6.2]{ChingBar}.

It therefore suffices to show that the Koszul duality arrow
\begin{align}\label{kda}
\int_{\ep^*(M_{=k})}A\rta\int^{\msf{Bar}_\mathcal{O}\ep^*(M_{=k})}\msf{Bar}_\mathcal{O}A
\end{align}
is an equivalence. 

By Lemma \ref{kda}, we have a commutative diagram
\[
\begin{xymatrix}
{
\int_{\ep^*(M_{=k})}A\arw[r]^-{(\ref{kda})}\arw[d]_\wr & \int^{\msf{Bar}_\mathcal{O}\ep^*(M_{=k})}\msf{Bar}_\mathcal{O}A & \int^{\eta^\sharp\msf{Bar}_{\mathcal{O}_{\msf{triv}}}(M_{=k})} \msf{Bar}_{\mathcal{O}}A\arw[l]_\sim^-{\ref{cobar comod}}\\
\int_{M_{=k}} \ep_!A\arw[r]_-K & \int^{\msf{Bar}_{\mathcal{O}_{\msf{triv}}}(M_{=k})} \msf{Bar}_{\mathcal{O}_{\msf{triv}}}(\ep_!A)\arw[r]^\sim_-{\ref{bar and left}}  & \int^{\msf{Bar}_{\mathcal{O}_{\msf{triv}}}(M_{=k})} \eta^\flat\msf{Bar}_{\mathcal{O}}A.\arw[u]^\wr
}
\end{xymatrix}
\]
Here we have used Lemma \ref{cobar comod} and Lemma \ref{bar and left} to say that the right horizontal arrows are equivalences, as indicated. 
The Koszul duality arrow $K$ over the trivial operad is an equivalence by Lemma \ref{linear pkd}. Since (\ref{kda}) factors through a series of equivalences, it is an equivalence. 

Thus the Koszul duality arrow is an equivalence on layers, and hence an equivalence of filtrations.
\end{proof}
\begin{rmk}
Note that we cannot prove this theorem by using the filtration $\rho_\bullet A$ 
since $\int_M$ does not take fiber sequences of $\mathcal{O}$-algebras to fiber sequences. 
\end{rmk} 
As a corollary, we obtain the main theorem. 
Similar to in \cite[Thm. 2.1.7]{PKD}, we need $\mathcal{V}$ to have a t-structure \cite[Def. 1.2.1.4]{HA} that is compatible with the symmetric monoidal structure on $\mathcal{V}$ \cite[Ex. 2.2.1.3]{HA}, is cocomplete (i.e. left complete) \cite[Pg. 45]{HA}, and for which $\mathcal{V}_{\geq 0}$ is closed under countable products. 
\begin{cor}\label{main theorem}
Let $\mathcal{V}$ have a cocomplete t-structure that is compatible with the symmetric monoidal structure on $\mathcal{V}$ and for which $\mathcal{V}_{\geq 0}$ is closed under countable products. 
Let $\mathcal{O}\in\msf{Oprd}$ be $(-1)$-connected and 
$A$ be a 0-connected $\mathcal{O}$-algebra in $\mathcal{V}$. 
Let $M$ be a right $\mathcal{O}$-module in $\msf{Sp}$ that is uniformaly bounded below. 
Then the Koszul duality arrow is an equivalence
\[\int_MA\xrta{\sim}\int^{\msf{Bar}_\mathcal{O}M}\msf{Bar}_\mathcal{O} A.\]
\end{cor}
\begin{proof}
It suffices to show that, under the additional connectivity assumptions, 
the filtrations $\int_{M_{\leq \bullet}}A$ and $\int^{\msf{Bar}_\mathcal{O}(M_{\leq\bullet})}$ converge. 

We would like to show that the arrow
$\int_MA\rta \lim_k\int_{M_{\leq k}}A$ 
is an equivalence. 
Since the t-structure is cocomplete, it suffices to check that the morphism 
$|\msf{Bar}_\bullet(M,\mathcal{O},A)|\rta\lim_k|\msf{Bar}_\bullet(M_{\leq k},\mathcal{O},A)|$ 
is an isomorphism on $\pi_i$ for all $i$,  \cite[Def. 1.2.1.11]{HA}. 
Fix $i$. 
By the connectivity conditions on $\mathcal{O}$ and $A$, 
and the boundedness of $M$, 
the terms of the bar complex are $d$-connected, 
for some $d$. 
By the Dold-Kan correspondence \cite[Thm. 1.2.4.1]{HA}, 
the cofiber of the skeleta, 
\[\mrm{sk}_{p-1}\msf{Bar}_\bullet(M_{\leq k},\mathcal{O},A)\rta \mrm{sk}_p\msf{Bar}_\bullet(M_{\leq k},\mathcal{O},A)\rta \mrm{cofib}(f)\]
is a summand of the suspension $\Sigma^p\msf{Bar}_p(M_{\leq k},\mathcal{O},A)$. 
This suspension is $(p+d)$-connected, since the terms of the bar complex are $d$-connected. 
Thus, 
for the purposes of computing $\pi_i$, we may replace the geometric realization with the 
$(i-d)$-skeleton, which is a finite colimit. 
Since $\mathcal{V}$ is stable, 
limits commute with finite colimits. 
Thus we have an equivalence 
\[ |\lim_k \msf{Bar}_\bullet(M_{\leq k},\mathcal{O},A)|\xrta{\sim}\lim_k|\msf{Bar}_\bullet(M_{\leq k},\mathcal{O},A)|.\] 
We are left with identifying the left-hand side with $|\msf{Bar}_\bullet(M,\mathcal{O},A)
|$. 
Fix a degree $p$. 
Let $U$ be $\mathcal{O}^{\circ p}\circ A$. 
Note that by the connectivity assumptions on $\mathcal{O}$ and $A$, 
and the compatibility of the t-structure with the monoidal structure, 

the object $U$ is $0$-connected. 
We may compute $\lim_k M_{\leq k}\circ U$ as follows,
\[\lim_kM_{\leq k}\circ U=\lim_k\left(\bigoplus_j M_{\leq k}(j)\otimes_{\Sigma_j} U^{\otimes j}\right)=\prod_jM(j)\otimes_{\Sigma_j}U^{\otimes j}.\]
Since $U$ is $0$-connected, and the t-structure is compatible with the monoidal structure, 
the terms $U^{\otimes j}$ become increasingly connected as $j$ increases. 
By the boundedness condition on $M$, this increasing connectivity is not counteracted by tensoring with $M(j)$. 
By our assumptions on the t-structure of $\mathcal{V}$, the fiber of the map from the coproduct to the product is infinitely connected, and hence zero, \cite[Prop. 1.2.1.19]{HA}. 

Thus, we may identify the product with the coproduct, 
\[M\circ U=\bigoplus_j M(j)\otimes_{\Sigma_j}U^{\otimes j}\simeq \prod_jM(j)\otimes_{\Sigma_j}U^{\otimes j}.\]
This completes the proof that $\int_{M_{\leq k}}A$ converges to $\int_MA$.

Consider the morphism of right comodules 
$\msf{Bar}_\mathcal{O}M\rta\lim_k \msf{Bar}_\mathcal{O}(M_{\leq k})$. 
It suffices to show that this map if an equivalence on underlying symmetric sequences. 
For this, we may check one arity at a time. 
That the arrow is an equivalence now follows from the fact that inn fixed arity, the filtration $M_{\leq k}$ is eventually constant. 

Now, $\int^{\msf{Bar}_{\mathcal{O}}(M_{\leq \bullet})}\msf{Bar}_{\mathcal{O}}A$ is given by the totalization of a cobar complex. 
Totalization commutes with limits, 
so it remains to show that the arrow
\begin{align}\label{c}
\msf{Cobar}^\bullet(\msf{Bar}_\mathcal{O}M,\msf{Bar}\mathcal{O},\msf{Bar}_\mathcal{O}A)\rta \lim_k\msf{Cobar}^\bullet(\msf{Bar}_\mathcal{O}M_{\leq k},\msf{Bar}\mathcal{O},\msf{Bar}_\mathcal{O}A),
\end{align}
is an equivalence. 
Since the t-structure on $\mathcal{V}$ is compatible with the tensor product, the proof of Lemma \ref{connect} applies to give that $\msf{Bar}\mathcal{O}$ is $(-1)$-connected and
 $\msf{Bar}_\mathcal{O}A$ is $0$-connected. 
Thus, for fixed $p$, 
the object $U'=(\msf{Bar}\mathcal{O})^{\circ p}\circ \msf{Bar}_\mathcal{O}A$ is 0-connected. 
Since $M$ is uniformaly bounded below, by Lemma \ref{connect}, $\msf{Bar}_\mathcal{O}(M_{\leq k})$ is uniformaly bounded below for each $k$. 
Since the action (\ref{eq-dpnil}) of $\msf{Sseq}(\msf{Sp})$ on $\mathcal{V}$ defining $\msf{CoAlg}^\mrm{dp,nil}_\mathcal{O}(\mathcal{V})$ is the same as that for algebras, 
we may  reason as we did for moving the limit inside the bar complex to say that (\ref{c}) is an equivalence. 
\end{proof}
\begin{rmk}
Since we are only considering modules and algebras valued in a \emph{stable} $\infty$-category, we can recover results about operads in $\msf{Spaces}$ from the corresponding results about operads in $\msf{Sp}$. In particular, taking $\mathcal{O}$ to be the operad in $\msf{Sp}$ obtained by taking suspension spectra in each arity of an opeard $\mathcal{O}'$ in $\msf{Spaces}$, Theorem \ref{level 3} implies that the Koszul duality arrow for $\mathcal{O}'$ is an equivalence (under the corresponding conditions).
\end{rmk}
For the right $\mathcal{O}$-module $\mathcal{O}$ of Example \ref{KD arrow for free},  Corollary \ref{main theorem} gives the following. 
\begin{cor}\label{koszul equiv proof}
Let $\mathcal{O}\in\msf{Oprd}$ be $(-1)$-connected and let $A$ be a $0$-connected $\mathcal{O}$-algebra in $\mathcal{V}$. Then the Koszul duality arrow is an equivalence
$A\rta\msf{Cobar}_{\msf{Bar}\mathcal{O}}(\msf{Bar}_\mathcal{O}A)$. 
\end{cor}
\begin{rmk}\label{last}
Note that the connectivity assumptions on $\mathcal{O}$ and $A$ in Corollary \ref{koszul equiv proof} are the same as those in Theorem \ref{ch thm}. 
As Corollary \ref{koszul equiv proof} is a special case of the Koszul duality arrow being an equivalence, 
one would not expect Corollary \ref{main theorem} to hold for a larger class of algebras without a more general version of the Francis-Gaitsgory conjecture (\ref{fg conj}) being proven. 
\end{rmk}

Corollary \ref{main theorem} may also be applied to the filtration of algebras in Theorem \ref{level 2}. 
\begin{cor}
Take $\mathcal{V}=\msf{Sp}$. Let $\mathcal{O},A,$ and $M$ as in Corollary \ref{main theorem}. 
Then the Koszul duality arrow factors through an equivalence of filtrations
\[\int_{M}\rho_\bullet A\rta\int^{\msf{Bar}_\mathcal{O}(M)}\tau^{\leq \bullet}(\msf{Bar}_\mathcal{O}A).\]
\end{cor}
\appendix
\section{Factorization Homology as a Coend}
The point of this section is to show that our notion of factorization homology over a general operad agrees with the notion considered in \cite[Rmk. 3.3.4]{zero}. We do this by computing the factorization homology of a free algebra over a general operad using the definition of factorization homology as defined in \cite[Rmk. 3.3.4]{zero}. This construction is an example of a coend. We begin by discussing general results about ends and coends and then specialize to factorization homology.
\subsection{Preliminaries on Ends and Coends}
To define the (co)end for $\infty$-categories, we will use the notion of twisted arrow $\infty$-categories. 
The following is \cite[Def. 7.31]{Loregian} or \cite[Def. 2.2]{Saul}.
\begin{defn}
Let $\mathcal{C}$ and $\mathcal{D}$ be $\infty$-categories. Let $F\colon\mathcal{C}^\mrm{op}\times\mathcal{C}\rta\mathcal{D}$ be a functor. The \emph{coend} of $F$ is the colimit over the twisted arrow $\infty$-category,
\[\msf{coend}_\mathcal{C}F\colon=\msf{colim}\left(\msf{TwAr}(\mathcal{C})^\mrm{op}\rta\mathcal{C}^\mrm{op}\times\mathcal{C}\xrta{F}\mathcal{D}\right),\]
where we have used the identification $(\mathcal{C}^\mrm{op}\times\mathcal{C})^\mrm{op}\simeq\mathcal{C}^\mrm{op}\times\mathcal{C}$.
The \emph{end} of $F$ is the limit over the twisted arrow $\infty$-category,
\[\msf{end}_\mathcal{C}F\colon=\msf{lim}\left(\msf{TwAr}(\mathcal{C})^\mrm{op}\rta\mathcal{C}^\mrm{op}\times\mathcal{C}\xrta{F}\mathcal{D}\right).\]
\end{defn}
\begin{defn}
Let $\mathcal{C}$ be an $\infty$-category and $\mathcal{D}$ a symmetric monoidal $\infty$-category. Let $X\colon\mathcal{C}^\mrm{op}\rta\mathcal{D}$ and $Y\colon\mathcal{C}\rta\mathcal{D}$ be functors. The \emph{tensor} of $X$ and $Y$, denoted $X\bigotimes_\mathcal{C}Y$ is the coend of $
{X\otimes Y\colon\mathcal{C}^\mrm{op}\times\mathcal{C}\rta\mathcal{D}}$. 
 The \emph{cotensor} of $X$ and $Y$, denoted $X\square_{\mathcal{C}}Y$ is end of ${X\otimes Y\colon\mathcal{C}^\mrm{op}\times\mathcal{C}\rta\mathcal{D}}$.
\end{defn}
The following, which describes how the tensor of functors interacts with left Kan extensions, is \cite[Prop. 2.4]{Saul}.
\begin{prop}\label{tensor and left end}
Let $\mathcal{C},\mathcal{C}'$ be $\infty$-categories and $\mathcal{D}$ a symmetric monoidal $\infty$-category. Let ${j\colon\mathcal{C}'\rta\mathcal{C}}$ be a functor. Let $j^*$ denote the restriction $\msf{Fun}(\mathcal{C},\mathcal{D})\rta\msf{Fun}(\mathcal{C}',\mathcal{D})$ from $\mathcal{C}$ to $\mathcal{C}'$ and let $j_!$ denote the left Kan extension along $j$. For functors $X\colon\mathcal{C}^\mrm{op}\rta\mathcal{D}$ and $Y\colon\mathcal{C}'\rta\mathcal{D}$, there is an equivalence of objects in $\mathcal{D}$,
\[X\bigotimes_{\mathcal{C}}j_!Y\simeq j^*X\bigotimes_{\mathcal{C}'}Y.\]
\end{prop}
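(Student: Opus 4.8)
The plan is to reduce the statement to the co-Yoneda (density) lemma via a Fubini-type interchange of coends. Write $\cal{C}(-,-)$ for the mapping-space functor of $\cal{C}$ and, for a space $S$ and an object $d\in\cal{D}$, write $S\otimes d$ for the copower $\msf{colim}_S d$; I assume throughout, as holds in the cases of interest, that $\cal{D}$ is cocomplete and that $\otimes$ preserves colimits separately in each variable. First I would recall the pointwise coend formula for left Kan extensions: for every $c\in\cal{C}$ there is an equivalence $j_!Y(c)\simeq\int^{c'\in\cal{C}'}\cal{C}(j(c'),c)\otimes Y(c')$. Substituting this into the definition of the tensor of functors gives
\[X\bigotimes_{\cal{C}}j_!Y\;\simeq\;\int^{c\in\cal{C}}X(c)\otimes\int^{c'\in\cal{C}'}\cal{C}(j(c'),c)\otimes Y(c').\]

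Next I would move $X(c)\otimes(-)$ past the inner coend, using the colimit-preservation of $\otimes$, and then apply Fubini for coends. Concretely, $\msf{TwAr}(\cal{C}\times\cal{C}')\simeq\msf{TwAr}(\cal{C})\times\msf{TwAr}(\cal{C}')$, so the resulting iterated coend is a colimit over $\msf{TwAr}(\cal{C})^\mrm{op}\times\msf{TwAr}(\cal{C}')^\mrm{op}$ and may be evaluated in the other order, yielding
\[\int^{c'\in\cal{C}'}\left(\int^{c\in\cal{C}}\cal{C}(j(c'),c)\otimes X(c)\right)\otimes Y(c').\]
The inner coend is precisely the co-Yoneda expression for the contravariant functor $X$ evaluated at the object $j(c')$, so $\int^{c\in\cal{C}}\cal{C}(j(c'),c)\otimes X(c)\simeq X(j(c'))=(j^*X)(c')$. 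Feeding this back in gives $\int^{c'\in\cal{C}'}(j^*X)(c')\otimes Y(c')=j^*X\bigotimes_{\cal{C}'}Y$, which is the claim. Since every step is natural in $X$ and $Y$, the resulting equivalence can be identified with the evident comparison map.

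As an alternative, I would give an argument that sidesteps an explicit Fubini theorem: both $Z\mapsto X\bigotimes_{\cal{C}}j_!Z$ and $Z\mapsto j^*X\bigotimes_{\cal{C}'}Z$ preserve colimits in $Z\in\msf{Fun}(\cal{C}',\cal{D})$ (coends commute with colimits in each variable, and $j_!$ is a left adjoint), and by co-Yoneda every functor $\cal{C}'\to\cal{D}$ is a colimit of functors of the form $\cal{C}'(c',-)\otimes d$. On such a generator one has $j_!(\cal{C}'(c',-)\otimes d)\simeq\cal{C}(j(c'),-)\otimes d$ (a short adjunction computation) and $X\bigotimes_{\cal{C}}(\cal{C}(j(c'),-)\otimes d)\simeq X(j(c'))\otimes d\simeq(j^*X)(c')\otimes d\simeq j^*X\bigotimes_{\cal{C}'}(\cal{C}'(c',-)\otimes d)$, again by co-Yoneda; hence the comparison map is an equivalence on a generating class, so everywhere.

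I expect the only real work to be the $\infty$-categorical bookkeeping: one must set up the twisted-arrow model of (co)ends so that the pointwise formula for $j_!$, the Fubini interchange, and the co-Yoneda lemma are simultaneously available together with their naturality. None of this is deep, and all of it is standard; see \cite{Loregian} and \cite{Saul}. In fact this proposition is \cite[Prop. 2.4]{Saul}, so in the end I would simply cite it, using the sketch above to pin down the direction of the equivalence.
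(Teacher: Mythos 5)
The paper offers no proof of its own for this proposition --- it simply cites \cite[Prop. 2.4]{Saul} --- and your proposal correctly identifies that citation as the endpoint, so in the end you take the same approach. Your two proof sketches (pointwise Kan-extension formula plus Fubini plus co-Yoneda, and reduction to corepresentable generators via colimit preservation) are both standard and correct expansions of the cited result; the implicit hypotheses you flag on $\cal{D}$ (cocompleteness and $\otimes$ preserving colimits separately in each variable) are indeed needed for the tensor of functors and the pointwise $j_!$ formula to make sense, and are tacitly in force throughout the paper.
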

In \cite[Rmk. 3.3.4]{zero}, factorization homology over an operad $\mathcal{O}$ in $\msf{Spaces}$ is defined as a coend over the symmetric monoidal envelope $\msf{Env}(\mathcal{O})$ of $\mathcal{O}$. Let $M$ be a right $\mathcal{O}$-module and $A$ an $\mathcal{O}$-algebra, both valued in $\mathcal{V}$. In our terminology, that means that $M$ is a symmetric sequence in $\mathcal{V}$ together with a map $M\circ\mathcal{O}\rta\mathcal{O}$ and $A$ is an object of $\mathcal{V}$ with a map $\mathcal{O}\circ A\rta\mathcal{O}$. In the setting used in \cite{zero}, the right $\mathcal{O}$-module $M$ is viewed as a functor $\tilde{M}\colon\msf{Env}(\mathcal{O})^{\mrm{op}}\rta\mathcal{V}$. The functor $\tilde{M}$ sends an object $n\in\msf{Env}(\mathcal{O})$ to $M(n)\in\mathcal{V}$. Similarly, in the setting used in \cite{zero}, the $\mathcal{O}$-algebra $A$ is viewed as a symmetric monoidal functor $\tilde{A}\colon\msf{Env}(\mathcal{O})\rta\mathcal{V}$ with $\tilde{A}(n)=A^{\otimes n}$. In \cite{zero}, the symbol $\int_{\tilde{M}}\tilde{A}$ is used to denote the coend 
\[\msf{coend}\left((\msf{Env}(\mathcal{O}))^\mrm{op}\times\msf{Env}(\mathcal{O})\xrta{M\otimes A}\mathcal{V}\right).\]
We would like to check that factorization homology over an operad $\mathcal{O}$, as defined in Definition \ref{FactHomDef}, 
agrees with the coend just described.
To do so, we compute the coend for the operad $\mathcal{O}_{\msf{triv}}$.
The analogous computation using Definition \ref{FactHomDef} 
is Lemma \ref{fact hom over triv}.
\begin{lem}\label{fact hom over triv coend}
Let $S$ be a symmetric sequence in $\mathcal{V}$ and $T$ an object in $\mathcal{V}$. View $S$ as a right $\mathcal{O}_\msf{triv}$-module and $T$ as a $\mathcal{O}_\msf{triv}$-algebra. There is an equivalence in $\mathcal{V}$,
\[S\bigotimes_{\msf{Env}(\mathcal{O}_{\msf{triv}})}T\simeq \bigoplus_pS(p)\otimes_{\Sigma_p}T^{\otimes p}.\]
\end{lem}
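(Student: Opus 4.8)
The plan is to identify the symmetric monoidal envelope $\msf{Env}(\cal{O}_\msf{triv})$, to observe that it is a groupoid, and then to use that coends over a groupoid collapse to ordinary colimits over that groupoid.

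First I would compute the envelope. A morphism $m\rta n$ in the envelope of a single-colored operad $\cal{O}$ consists of a map $f\colon\underline m\rta\underline n$ of finite sets together with a point of $\prod_{j\in\underline n}\cal{O}(f^{-1}(j))$. Since $\cal{O}_\msf{triv}(k)$ is the monoidal unit when $k=1$ and is initial otherwise, this space is empty unless every fibre of $f$ is a singleton, i.e. $f$ is a bijection and $m=n$, in which case it is the set $\Sigma_n$ of such bijections. Hence $\msf{Env}(\cal{O}_\msf{triv})\simeq\msf{Fin}^\mrm{bij}\simeq\coprod_{p\geq0}B\Sigma_p$, a groupoid. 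Under this identification the right $\cal{O}_\msf{triv}$-module $S$ becomes the functor $\tilde S\colon(\msf{Fin}^\mrm{bij})^\mrm{op}\rta\cal{V}$ with $\tilde S(p)=S(p)$, and the $\cal{O}_\msf{triv}$-algebra $T$ becomes the symmetric monoidal functor $\tilde T\colon\msf{Fin}^\mrm{bij}\rta\cal{V}$ with $\tilde T(p)=T^{\otimes p}$, so that $\int_S T$ is by definition $\msf{coend}_{\msf{Fin}^\mrm{bij}}(\tilde S\otimes\tilde T)$.

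Next I would invoke the following general fact: for a groupoid $G$ the twisted arrow $\infty$-category $\msf{TwAr}(G)$ is again a groupoid, the inclusion $G\hookrta\msf{TwAr}(G)$, $c\mapsto\mrm{id}_c$, is an equivalence (every morphism of $G$ is canonically equivalent in $\msf{TwAr}(G)$ to an identity), and under it the projection to $G^\mrm{op}\times G$ becomes, via the canonical equivalence $G\simeq G^\mrm{op}$, the diagonal. Consequently, for any $F\colon G^\mrm{op}\times G\rta\cal{D}$ one has $\msf{coend}_G F\simeq\msf{colim}_{c\in G}F(c,c)$; this can also be quoted from \cite{Loregian}. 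Applying it with $G=\msf{Fin}^\mrm{bij}$ and $F=\tilde S\otimes\tilde T$ gives
\[\int_S T\simeq\msf{colim}_{p\in\msf{Fin}^\mrm{bij}}\left(S(p)\otimes T^{\otimes p}\right),\]
where the diagram sends $p$ to $S(p)\otimes T^{\otimes p}$ with the diagonal $\Sigma_p$-action. Finally, since $\msf{Fin}^\mrm{bij}=\coprod_{p\geq0}B\Sigma_p$, this colimit is the coproduct over $p$ of the colimits over the components $B\Sigma_p$, each of which is by definition the homotopy orbit object $S(p)\otimes_{\Sigma_p}T^{\otimes p}$; as $\cal{V}$ is stable, coproducts are direct sums, so $\int_S T\simeq\bigoplus_{p\geq0}S(p)\otimes_{\Sigma_p}T^{\otimes p}$, recovering the bar-construction computation of Lemma \ref{fact hom over triv}.

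The step I expect to be the main obstacle is the reduction of the coend over a groupoid to an ordinary colimit: one has to set up the equivalence $\msf{TwAr}(G)\simeq G$ carefully and check that the two structure maps become, up to the identification $G\simeq G^\mrm{op}$, the two legs of the diagonal, so that $\msf{coend}_G F$ genuinely collapses to $\msf{colim}_c F(c,c)$. Once that is in place, the identification of the envelope and the evaluation of the resulting colimit are routine.
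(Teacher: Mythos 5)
Your proposal is correct and follows essentially the same route as the paper: identify $\msf{Env}(\cal{O}_\msf{triv})\simeq\msf{Fin}^\mrm{bij}\simeq\coprod_p B\Sigma_p$, use that $\msf{TwAr}$ of a groupoid is equivalent to the groupoid with projection becoming the (twisted) diagonal $\sigma\mapsto(\sigma^{-1},\sigma)$, and then evaluate the resulting colimit component-by-component as $\Sigma_p$-coinvariants. The only cosmetic difference is that you package the groupoid reduction as a stand-alone general fact, while the paper carries out the same reduction in line; both give exactly the same computation.
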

\begin{proof}
By definition, $S\bigotimes_{\msf{Env}(\mathcal{O}_{\msf{triv}})}T$ is the coend over $\msf{Env}(\mathcal{O}_\msf{triv})$ of $S\otimes T$. One can identify $\msf{Env}(\mathcal{O}_\msf{triv})$ with $\msf{Fin}^\mrm{bij}$. 
Under this identification, the coend in question becomes a coend over $\msf{Fin}^\mrm{bij}$. By definition, the coend is a colimit over the twisted arrow $\infty$-category,
\[S\bigotimes_{\msf{Fin}^\mrm{bij}}T=\msf{colim}\left(\msf{TwAr}(\msf{Fin}^\mrm{bij})^\mrm{op}\rta(\msf{Fin}^\mrm{bij})^\mrm{op}\times\msf{Fin}^\mrm{bij}\xrta{S\otimes T}\mathcal{V}\right).\] 
For any $\infty$-groupoid $\mathcal{G}$, there are equivaelence 
${\msf{TwAr}(\mathcal{G})\simeq\mathcal{G}\simeq\mathcal{G}^\mrm{op}}$. 
In particular, for the $\infty$-groupoid $\msf{Fin}^\mrm{bij}$, the twisted arrow $\infty$-category splits as a coproduct of $\infty$-categories, 
$\msf{TwAr}(\msf{Fin}^\mrm{bij})\simeq\msf{Fin}^\mrm{bij}\sim \coprod_p B\Sigma_p$. 
Hence the colimit of interest splits as a direct sum,
\begin{align*}
\msf{colim}\left(\msf{TwAr}(\msf{Fin}^\mrm{bij})\rta(\msf{Fin}^\mrm{bij})^\mrm{op}\times\msf{Fin}^\mrm{bij}\xrta{S\otimes T}\mathcal{V}\right)\\
\simeq \bigoplus_p\msf{colim}\left(B\Sigma_p\rta(\msf{Fin}^\mrm{bij})^\mrm{op}\times\msf{Fin}^\mrm{bij}\xrta{S\otimes T}\mathcal{V}\right).
\end{align*}
The functor
\[B\Sigma_p\rta(\msf{Fin}^\mrm{bij})^\mrm{op}\times\msf{Fin}^\mrm{bij}\xrta{S\otimes T}\mathcal{V}\]
factors through $(B\Sigma_p)^\mrm{op}\times B\Sigma_p$. Tracing through the identifications, the functor ${B\Sigma_p\rta(B\Sigma_p)^\mrm{op}\times B\Sigma_p}$ is the one induced from the morphism of groups $\sigma\mapsto (\sigma^{-1},\sigma)$. By definition, the colimit can be identified with the coinvariants,
\[\msf{colim}\left( B\Sigma_p\rta (B\Sigma_p)^\mrm{op}\times B\Sigma_p\xrta{S(p)\times T(p)} \mathcal{V}\right)\simeq S(p)\otimes_{\Sigma_p} T(p).\]
Thus $S\bigotimes_{\msf{Fin}^\mrm{bij}} T\simeq \bigoplus_pS(p)\otimes_{\Sigma_p}T(p)$ 
and the lemma follows. 
\end{proof}
\begin{rmk}
We check that our notion of factorization homology over $\mathcal{O}$ agrees with the definition in \cite[Rmk. 3.3.4]{zero} in the case that $\mathcal{O}$ is an operad in spaces. We restrict to operads in spaces since, at the time of writing, there does not exist a developed theory of symmetric monoidal envelopes of operads in more general $\infty$-categories.

Note that every $\mathcal{O}$-algebra has a resolution by free $\mathcal{O}$-algebras. As is done for the little disks operad in \cite[Lem. 2.5.2]{PKD}, one can show this using the $\infty$-categorical Barr-Beck theorem \cite[Thm. 4.7.3.5]{HA}.
Say $A\simeq|\iota_! V_\bullet|$. Since both the coend $\int_{\tilde{M}}(-)$ and the relative tensor product $M\circ_{\mathcal{O}}(-)$ commute with sifted colimits, it suffices to check that the two notations agree on free $\mathcal{O}$-algebras. By Example \ref{fact hom of free}, we have
\[M\circ_{\mathcal{O}}\iota_!V\simeq \bigoplus_p M(p)\otimes_{\Sigma_p}V^{\otimes p}.\]
By Proposition \ref{tensor and left end} combined with the computation of Lemma \ref{fact hom over triv coend}, the coend of $\tilde{M}$ and $\tilde{A}$ is 
$\int_{\tilde{M}}\tilde{A}\simeq\bigoplus_p M(p)\otimes_{\Sigma_p} A^{\otimes p}$.
\end{rmk}
Let $\mathcal{P}$ be a cooperad in spaces. One can similarly identify factorization cohomology for $\mathcal{P}$ as defined in \cite{zero} as an cotensor with the definition used here. 

\subsection{The Little Disks Operad}\label{littlediskssection}
In this section, we discuss the Koszul duality arrow used in \cite{zero} and \cite{PKD}. 
We describe differences between their approach and our approach applied to the little disks operad.

Let $\mathcal{E}^{nu}_n$ be the nonunital little $n$-disks operad. The monoidal envelope of $\mathcal{E}^{nu}_n$ is equivalent to the $\infty$-category of framed $n$-disks, see \cite[Ex. 2.10]{AF1} or \cite[Rmk. 5.1.0.5]{HA}. One can therefore reconstruct factorization homology of a framed $n$-manifold $M$ as the $\mathcal{E}^{nu}_n$ factorization homology of the right $\mathcal{E}^{nu}_n$-module $\bb{E}_M$,
\[\int_M(-)\simeq \bb{E}_M\circ_{\mathcal{E}^{nu}_n}(-).\]
Here, the underlying symmetric sequence of $\bb{E}_M$ is $\bb{E}_M(i)=\msf{Conf}_i(M)$.

The main question addressed in \cite{PKD} is when the Poincar\'e/Koszul duality arrow
\[\int_{M_*}A\rta\int^{(M_*)^\neg}\msf{Bar}^{(n)}A\]
is an equivalence. 
Here, $\msf{Bar}^{(n)}$ is an iterated bar construction. 
The identification of $\msf{Bar}_{\mathcal{E}^{nu}_n}(-)$ with an iterated bar construction can be found in \cite[\S 5.2]{HA}. 

In order to identify the Poinar\'e/Koszul duality arrow studied in \cite{PKD} with the Koszul duality arrow for $\mathcal{E}^{nu}_n$ factorization homology, one would need to use the self-Koszul duality (up to a shift) of $\mathcal{E}^{nu}_n$, 
which was recently proven by Ching and Salvatore, \cite{ChingSalvatore}. 
Under such an identification, one would need to understand how our definition of the Koszul duality arrow relates to that in \cite[\S 3.1]{zero}, and how $\bb{E}_{M_*}$ relates to $\bb{E}_{(M_*)^\neg}$. 

In \cite{PKD}, the Poincar\'e/Koszul duality arrow is shown to be an equivalence by factoring the map through a filtration of equivalences,
\begin{align}\label{kk}
P_\bullet\int_{M_*}A\xrta{\sim}\tau^{\leq\bullet}\int^{(M_*)^\neg}\msf{Bar}^{(n)}A.
\end{align}
The filtration on the left is a Goodwillie filtration. The filtration on the right is referred to as the \emph{cardinality} filtration. 
It would be interesting to study the relationship between the filtrations used in \cite{PKD} and the filtrations $(-)_{\leq \bullet}$, $\rho_\bullet$, and $\tau^{\leq \bullet}$ used here.

\bibliographystyle{plain}
\bibliography{Koszul}
\end{document}